\documentclass[11pt]{article}
\usepackage[english]{babel}
\usepackage[margin=1in]{geometry}
\usepackage{amssymb,amsmath,amsthm,mathtools}
\usepackage{microtype}
\usepackage[round,authoryear]{natbib}
\usepackage{booktabs}
\usepackage[T1]{fontenc}
\usepackage{times}
\usepackage[colorlinks=true,urlcolor=blue,linkcolor=blue,citecolor=red]{hyperref}
\newtheorem{theorem}{Theorem}[section]
\newtheorem{lemma}[theorem]{Lemma}
\newtheorem{proposition}[theorem]{Proposition}
\newtheorem{corollary}[theorem]{Corollary}
\numberwithin{equation}{section}
\newcommand{\R}{\mathbb R}
\newcommand{\E}{\mathbb E}
\newcommand{\PP}{\mathbb P}
\newcommand{\F}{\mathcal F}
\newcommand{\one}{\mathbf1}
\newcommand{\dd}{\,\mathrm d}
\newcommand{\convp}{\xrightarrow{p}}
\newcommand{\convd}{\Rightarrow}
\newcommand{\rise}[2]{#1^{\overline{#2}}}
\DeclareMathOperator{\Var}{Var}
\DeclareMathOperator{\Cov}{Cov}
\DeclareMathOperator{\Lip}{Lip}
\title{Asymptotic Normality of the Hill Estimator\\
in Linear Preferential Attachment}
\author{Tiandong Wang \thanks{Shanghai Center for Mathematical Sciences, Fudan University, Shanghai, China.
    Email: \texttt{td\_wang@fudan.edu.cn}.}}
\date{}
\hypersetup{pdftitle={Asymptotic Normality of the Hill Estimator in Linear Preferential Attachment},
 pdfauthor={Tiandong Wang},
 pdfkeywords={preferential attachment, Hill estimator, asymptotic normality, extreme degrees}}
\begin{document}
\maketitle
\begin{abstract}
We prove asymptotic normality of the Hill estimator based on the
largest $k_n$ degrees in a homogeneous linear preferential attachment
graph with $n$ vertices. Each new vertex sends a fixed number
$m\ge1$ of edges sequentially to older vertices, with attachment
weight $d+\delta$ and fixed $\delta>-m$. Let
$\alpha=m/(2m+\delta)$. For every deterministic sequence
$k_n\to\infty$ satisfying $\sqrt{k_n}(k_n/n)^\alpha\to0$, the
estimator, centered at $\alpha$ and multiplied by $\sqrt{k_n}$,
converges to a centered normal law with an explicit variance strictly smaller than $\alpha^2$.
The reduction relative to the Hill variance for independent observations
separates into contributions from the different degree laws at different
birth times and from competition for new edges. The contribution from
competition is $1/m$ times that from different birth times.
Representations using Gamma and beta functions give a consistent
standard error and a studentized normal limit. The proof uses martingales
indexed by edge attachments and comparisons with independent
pure-birth processes, while retaining the integer degrees and their ties.
\end{abstract}

\section{Introduction}\label{sec:intro}
Preferential attachment produces power-law degree distributions by
giving vertices with larger degrees a greater chance of receiving
new edges \citep{barabasi1999emergence}. For linear attachment with a
fixed number $m\ge1$ of edges per new vertex and weight $d+\delta$,
where $\delta>-m$, the limiting degree distribution has tail exponent
$2+\delta/m$ \citep{bollobas2001degree,hofstad2017}. The Hill
estimator \citep{hill1975simple} estimates its reciprocal,
$\alpha=m/(2m+\delta)$, from the largest degrees. To quantify the
uncertainty of this estimate, one must account for the dependence
between degrees and their different distributions across birth times.

\citet{wang2019consistency} proved consistency of the Hill estimator
in a linear preferential attachment model with one edge per new
vertex. Their analysis combines birth-process embeddings, degree
concentration and convergence of the tail empirical measure.
\citet{wang2020degree} established related consistency results for
the in- and out-degrees of a directed model. These results identify
the probability limits of the estimators. A normal limit additionally
requires control of the fluctuations of the empirical measure and
of the order statistic used as the threshold.

\citet{resnick2016normality} and \citet{baldassarri2022normality}
proved joint central limit theorems for degree counts, with the latter
covering any fixed number of edges per arriving vertex. Their
convergence is in the product topology, so it controls every fixed
collection of degree values. The Hill estimator uses a growing number
$k_n$ of upper degrees in a graph with $n$ vertices, with
$k_n/n\to0$. Its threshold diverges with $n$. Applying a fixed-degree
central limit theorem at that threshold would require additional
uniform estimates, including control of the logarithmic summands.

Using the attachment likelihood, \citet{gao2017normality} proved
asymptotic normality and efficiency
of maximum-likelihood estimation for the affine parameter. When the
initial degree of each vertex is the same fixed integer, the final
degree sequence is sufficient for their likelihood, and the estimator
has rate $\sqrt n$. Hill estimation uses only the upper degrees and
has a different normalization. Its limiting distribution therefore
requires a separate analysis even when only one graph is observed.

The classical Hill central limit theorem for independent observations
has variance $\alpha^2$ under suitable conditions on the tail and
the number of upper order statistics
\citep{haeusler1985asymptotic,dehaan2006extreme}. Extensions to
stationary dependent observations require conditions on their joint
tails \citep{hsing1991tail,resnick1998tail}. For the Norros--Reittu
and Chung--Lu graph models, \citet{cirkovic2024tail} proved normal
limits by comparing the largest degrees with independent vertex
weights. Their approximation preserves the iid Hill variance.
In preferential attachment, each new edge increases exactly one
older vertex's degree, and the resulting covariance contributes to
the limiting variance.

We prove a central limit theorem for the Hill estimator in the
sequential homogeneous model for every fixed $m\ge1$ and $\delta>-m$.
Under~\eqref{eq:k-rate}, Theorem~\ref{thm:clt} identifies a positive
limiting variance strictly smaller than $\alpha^2$, and
Corollary~\ref{cor:studentized} gives a studentized normal limit.
The reduction separates into contributions from different degree
laws at different birth times and from competition for edges.
The first contribution parallels the
variance reduction for independent heterogeneous observations in
\citet{einmahl2023extreme}. The second records the dependence created
by the attachment rule. This calculation gives a standard error for
Hill estimation that incorporates both features of the graph.

The proof follows the terminal degree statistic through individual
edge attachments. The predictable variance of the resulting
martingale retains the covariance between competing vertices.
Comparisons with independent pure-birth processes identify its limit,
and moment bounds control vertices born very early or very late.
We then approximate the discontinuous logarithmic summand and invert
the empirical tail at the random threshold. Bounds on the expected
degree profile remove the mean bias under~\eqref{eq:k-rate} and make
the effect of the integer lattice negligible at scale $k_n^{-1/2}$.

Section~\ref{sec:model} defines the graph and states the results.
Section~\ref{sec:clt} contains their proofs, and
Section~\ref{sec:discussion} describes the role of the degree lattice
and the scope of the theorem. Appendix~\ref{sec:tools} provides the
degree-process estimates, and Appendix~\ref{sec:examples} evaluates
the variance and the stationary-profile bias in special cases.

\section{Model and results}\label{sec:model}
\subsection{Sequential attachment}
Fix an integer $m\ge1$ and an offset $\delta>-m$. At time $2$, two vertices are joined by $m$
parallel edges. For $t\ge3$, vertex $t$ sends $m$ edges to vertices in
$\{1,\ldots,t-1\}$. Let $D_v(t,i)$ be the degree of an older vertex
$v<t$ after the first $i$ of these edges, with
$D_v(t,0)=D_v(t-1)$ and $D_v(t,m)=D_v(t)$. The new vertex becomes an
eligible target at the next step, with degree $D_t(t)=m$.
Write $\F_{t,i}$ for the history through edge $i$, and let $J_{t,i}$
be its older endpoint. Given $\F_{t,i-1}$,
\begin{equation}\label{eq:attachment}
 \PP(J_{t,i}=v\mid\F_{t,i-1})
 =\frac{D_v(t,i-1)+\delta}{W_{t,i}},\qquad
 W_{t,i}=(2m+\delta)(t-1)-2m+i-1.
\end{equation}
The denominator is deterministic because the older vertices have
total degree $2m(t-2)+i-1$ before edge $i$. Multiple edges are
allowed and loops are excluded. Degrees are updated after each edge,
and the arriving vertex cannot be selected by its own edges.
All results refer to this attachment convention.

Write
\begin{equation}\label{eq:parameters}
 \begin{aligned}
 p&=\frac{2m+\delta}{m}>1,\qquad \alpha=\frac1p\in(0,1),\\
 r&=m+\delta=m(p-1)>0,\qquad
 \beta^p=\frac{\Gamma(r+p)}{\Gamma(r)}.
 \end{aligned}
\end{equation}
Throughout $m$ and $\delta$ are fixed as $n\to\infty$.
Constants denoted by $C$ may depend on them and on fixed arguments of
the statement in which they occur.

Let $D_{(1)}(n)\ge\cdots\ge D_{(n)}(n)$ be the ordered degrees. For
$1\le k<n$, the Hill estimator \citep{hill1975simple} is
\begin{equation}\label{eq:hill}
 \widehat\alpha_n(k)=\frac1k\sum_{j=1}^k
             \log\frac{D_{(j)}(n)}{D_{(k+1)}(n)}.
\end{equation}
A degree equal to $D_{(k+1)}(n)$ contributes zero to~\eqref{eq:hill},
so ties need no separate convention. We take $k=k_n\to\infty$.

\subsection{The central limit theorem}
For $a>0$, let $G_a$ have the Gamma distribution of shape $a$ and scale
one, and let
\[
 \Gamma(a,x)=\int_x^\infty y^{a-1}e^{-y}\dd y
\]
be the upper incomplete Gamma function. Define
\begin{equation}\label{eq:variance-function}
 \begin{split}
 \Lambda_{m,\delta}
 &=\frac{1}{p\Gamma(r)\Gamma(r+p)}
          \int_0^\infty x^{p-1}\Gamma(r,x)^2\dd x,\\
 V_m(\delta)&=\alpha^2-(1+1/m)\Lambda_{m,\delta}.
 \end{split}
\end{equation}
The integral is finite: $\Gamma(r,x)\le2^r\Gamma(r)e^{-x/2}$ for
$x\ge0$. The following theorem gives the limiting variance and the
rate needed for centering at $\alpha$.

\begin{theorem}\label{thm:clt}
For the graph defined by~\eqref{eq:attachment}, let $m\ge1$ and $\delta>-m$.
Let $k_n$ be deterministic integers with $1\le k_n<n$. If
$k_n\to\infty$ and
\begin{equation}\label{eq:k-rate}
 \sqrt{k_n}(k_n/n)^\alpha\longrightarrow0,
\end{equation}
then $\sqrt{k_n}\{\widehat\alpha_n(k_n)-\alpha\}\convd N(0,V_m(\delta))$
in $\R$.
\end{theorem}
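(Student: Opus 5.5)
The plan is to write the Hill estimator as a functional of the tail empirical measure,
\[
\widehat\alpha_n(k)=\frac1k\sum_{v\le n}\log^+\frac{D_v(n)}{D_{(k+1)}(n)}
=\int_{D_{(k+1)}(n)}^\infty \frac{N_n(y)}{k}\,\frac{\dd y}{y},
\qquad N_n(y)=\#\{v\le n: D_v(n)>y\},
\]
and to carry out three steps: a functional CLT for $N_n$ at the scale $b_n=\beta(n/k_n)^\alpha$, inversion at the random threshold, and removal of the bias.

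\textbf{Step 1 (tail empirical process).} For fixed $y>0$ I will prove a joint CLT for $Z_n(y)=k_n^{-1/2}\{N_n(yb_n)-\E N_n(yb_n)\}$ and for the integrated statistic $\int_y^\infty Z_n(u)\,u^{-1}\dd u$. The heuristic is the standard one. A vertex born at time $s=un$ has terminal degree roughly $(n/s)^{\alpha}$ times a $G_r$-type variable; this comes from the pure-birth embedding, where $D_v+\delta$ grows like a Yule process with rate proportional to $D+\delta$. Hence $\PP(D_v(n)>yb_n)$ is a Gamma tail $\Gamma(r,x)/\Gamma(r)$ with $x\propto u^{\alpha}\cdot(\cdot)$. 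For the fluctuations I will use the Doob martingale
\[
M_{t,i}=\E\Big[\textstyle\sum_v f(D_v(n)/b_n)\,\Big|\,\F_{t,i}\Big],
\]
indexed by single edge attachments, with $f$ a smoothed version of $\one\{x>y\}$ or of $\log^+(x/y)$. Each increment is determined by which vertex $J_{t,i}$ is chosen. The conditional variance is therefore a sum of $(D_v+\delta)/W$-weighted squared deviations of $h_v=\E[\,\cdot\mid\text{$v$ chosen}]$. Expanding it gives a diagonal term, which yields $\alpha^2$ as in the iid case, minus two negative corrections: the squared mean of $h_v$ across birth times (heterogeneity) and the competition covariance. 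I expect integration over birth times, after changing variables to $x$, to produce $\Lambda_{m,\delta}$ and $\Lambda_{m,\delta}/m$ respectively; the factor $1/m$ reflects that there are $m$ edges per step. I will then apply the martingale CLT (Lindeberg holds because increments are $O(k_n^{-1/2})$ after normalization). The predictable variance converges in probability once the dependence among the $D_v(t,i)$ is replaced by independent pure-birth processes; the appendix estimates allow this. Moment bounds will cut off vertices with $s\le\epsilon n$ or born late, where degrees exceed $b_n$ only with negligible probability.

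\textbf{Step 2 (threshold and logarithm).} I will write $D_{(k+1)}(n)=b_n(1+\Delta_n)$. From Step 1 at $y=1$, $N_n(b_n)/k_n=1+k_n^{-1/2}Z_n(1)+\text{bias}$, and monotonicity of $N_n$ gives $\sqrt{k_n}\,\Delta_n\approx \alpha Z_n(1)$ up to the integer lattice. Since $\int_{c}^\infty N_n(y)/(ky)\,\dd y$ is differentiable in $c$ with derivative approximately $-1/c$, a first-order expansion gives
\[
\sqrt{k_n}(\widehat\alpha_n-\alpha)\approx \int_1^\infty Z_n(u)\,u^{-1}\dd u-\alpha Z_n(1)+\text{bias}.
\]
The variance of this combination is what $V_m(\delta)$ should equal; I will check this against the Gamma representation. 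The lattice issue arises because degrees are integers, so $N_n$ jumps at integers. Since $b_n\to\infty$, relative gaps are $O(1/b_n)=O((k/n)^{\alpha})$, which is $o(k^{-1/2})$ by \eqref{eq:k-rate}.

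\textbf{Step 3 (bias)} and the \textbf{main obstacle.} I need $\sqrt{k_n}\{\E N_n(yb_n)/k_n-y^{-1/\alpha}\}\to0$ uniformly on compacts, together with the corresponding integrated statement. The expected degree profile has an explicit ratio-of-Gammas form whose tail is $\beta^p y^{-p}\{1+O(1/y)\}$ on the scale $b_n$. This gives an error of order $(k/n)^{\alpha}$, which vanishes after multiplication by $\sqrt{k_n}$ exactly by \eqref{eq:k-rate}. The hardest step will be Step 1's variance identification: showing that the predictable quadratic variation converges to the deterministic limit uniformly enough to cover the integral over $u\in[1,\infty)$, with tail control at large $u$. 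I would first do this for smooth $f$ with compact support in $(0,\infty)$ and then extend by monotone sandwiching. Tightness in $u$ would use second-moment bounds on increments of $N_n$ derived from the same martingale.
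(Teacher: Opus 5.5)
Your overall architecture is the paper's. You use the identity $\widehat\alpha_n(k)=\int_{\widehat x_n}^\infty\widehat\nu_n(x,\infty)\,\dd x/x$ and reduce to $S_n(h)$, with $h(x)=(\log x-\alpha)\one\{x>1\}$, plus a mean term and a threshold remainder. You use the edge-indexed Doob martingale, and you split its predictable variance into $\nu h^2=\alpha^2$ minus heterogeneity and competition terms.

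The genuine gap is Step~3. Only the limiting profile $\pi_j$ of \eqref{eq:pi} has a Gamma-ratio form. At finite $n$, $\E\mathcal N_j(n)=n\pi_j+e_j(n)$. You need $k^{-1/2}\sum_{j>a_nx}e_j(n)\to0$ uniformly near $x=1$ and $k^{-1/2}\sum_{j>a_n}e_j(n)h(j/a_n)\to0$, with no lower growth condition on $k$. Nothing in your plan controls $e_j(n)$ summed over the tail, and the obvious substitutes fail:
\begin{itemize}
\item A sup-norm bound $\sup_j|e_j(n)|\le C$ is useless after summing over the $\asymp n$ large degrees. Even inside a window of relative width $k^{-1/2}$ at the threshold, it leaves a term of order $a_n/k$, which need not vanish under \eqref{eq:k-rate} (take $k\approx\log n$).
\item A uniform bound on the tail sums $\sum_{j\ge d}e_j(n)$ would still leave a term of order $\log n/\sqrt k$ in the logarithmic part.
\item The pure-birth comparison controls only Lipschitz functionals, far too coarsely for indicators at precision $k^{-1/2}$.
\end{itemize}
Your Step~2 needs the same estimate. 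Inverting at the threshold requires $\sqrt k\{\E\widehat\nu_n(1+u/\sqrt k,\infty)-\E\widehat\nu_n(1,\infty)\}=-pu+o(1)$ uniformly on compacts.

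The paper supplies this in Lemma~\ref{lem:mean-profile}. It uses the exact recursion $\mu(t)=T_t\mu(t-1)+\mathbf e_m$ with $T_t=\prod_i(I+A/W_{t,i})$, and a corrected reference profile $(t-c_0)\pi-d_0A\pi$ whose one-step defect is $O(t^{-2})$ in the norm weighted by $\rise{(j+\delta)}{q}$. It truncates at the maximal degree so the transition coefficients stay nonnegative, and then iterates. This gives $\sum_j\rise{(j+\delta)}{q}|e_j(t)|\le C_qt^{q\alpha}$ for $q<p$. The cases $q=0,1$ together with Jensen's inequality yield the bound $O(\sqrt k/a_n+(1+\log k)/\sqrt k)$ of Lemma~\ref{lem:mean-tail}, not merely the stationary error $O(\sqrt k(k/n)^\alpha)$ you state.

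Your localization in Step~1 is also misplaced, in a way that would break the argument. Degrees of order $b_n$ are reached by vertices born at times of order $k$, not $n$: $D_v(n)/b_n\approx\beta^{-1}(v/k)^{-\alpha}G_r$. Vertices with $s\ge\epsilon n$ are indeed negligible, but cutting off $s\le\epsilon n$ would discard every contributing vertex. The correct cutoffs are:
\begin{itemize}
\item $v<\varepsilon k$, whose total contribution is $O(\varepsilon)$ in the diagonal part and $O(\varepsilon^{1-\alpha})$ in the competition part;
\item $v>Lk$, handled by degree moments.
\end{itemize}
The limiting covariance is then an integral over $s=v/k\in(0,\infty)$. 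The competition term is an integral over edge times $t=uk$, $u\in(0,\infty)$, along which $\bar\xi^f_e\to\Omega_f(u)/(mpu)$.

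Finally, when you replace PA degrees by independent pure-birth processes to get convergence of the predictable variance, couple only over bounded time ratios. The mismatch bound $\sum_{t>v}(t/v)^{2\alpha}t^{-2}$ diverges when $\alpha\ge1/2$. The paper therefore couples up to time $Tk$ and controls later edges through the $L^4$ bound \eqref{eq:vertex-variance-time-tail}.
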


Condition~\eqref{eq:k-rate} is equivalent to
$k_n=o(n^{2m/(4m+\delta)})$ and implies $k_n/n\to0$.
It makes the error caused by the integer lattice of degrees negligible
at the scale $k_n^{-1/2}$, and no lower growth condition on $k_n$ is
needed. Section~\ref{sec:discussion} shows that~\eqref{eq:k-rate} is
also necessary for the local approximation of the random threshold
used in the proof. The next result compares $V_m(\delta)$ with
$\alpha^2$, the asymptotic variance of the Hill estimator for
independent observations with extreme-value index $\alpha$
\citep{haeusler1985asymptotic,dehaan2006extreme}.

\begin{corollary}\label{cor:variance}
For every $m\ge1$ and $\delta>-m$, $0<V_m(\delta)<\alpha^2$.
\end{corollary}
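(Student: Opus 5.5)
The plan is to rewrite $\Lambda_{m,\delta}$ as a ratio of Gamma moments. The upper bound $V_m(\delta)<\alpha^2$ is then immediate. The lower bound $V_m(\delta)>0$ reduces to comparing the minimum and maximum of two independent Gamma variables.

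\emph{Upper bound.} The integrand $x^{p-1}\Gamma(r,x)^2$ is continuous and strictly positive on $(0,\infty)$. The integral is finite by the bound stated after~\eqref{eq:variance-function}. Hence $\Lambda_{m,\delta}>0$, and therefore $V_m(\delta)=\alpha^2-(1+1/m)\Lambda_{m,\delta}<\alpha^2$.

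\emph{Lower bound.} Let $G,G'$ be independent copies of $G_r$. Then $\Gamma(r,x)=\Gamma(r)\PP(G>x)$, so $\Gamma(r,x)^2=\Gamma(r)^2\PP(\min(G,G')>x)$. For a nonnegative random variable $Y$ and $p>0$ we have $\E Y^p=\int_0^\infty p x^{p-1}\PP(Y>x)\dd x$. Applying this with $Y=\min(G,G')$ gives
\[
 \int_0^\infty x^{p-1}\Gamma(r,x)^2\dd x=\frac{\Gamma(r)^2}{p}\,\E\bigl[\min(G,G')^p\bigr].
\]
Since $\E G^p=\Gamma(r+p)/\Gamma(r)$, this yields
\[
 \Lambda_{m,\delta}=\frac{\Gamma(r)}{p^2\Gamma(r+p)}\,\E\bigl[\min(G,G')^p\bigr]
 =\alpha^2\,\frac{\E[\min(G,G')^p]}{\E[G^p]}.
\]
So $V_m(\delta)>0$ is equivalent to
\[
 \E\bigl[\min(G,G')^p\bigr]<\frac{m}{m+1}\,\E[G^p].
\]

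\emph{Key step.} Pointwise, $\min(G,G')^p+\max(G,G')^p=G^p+G'^p$. Taking expectations,
\[
 \E[\min(G,G')^p]+\E[\max(G,G')^p]=2\E[G^p]<\infty.
\]
The Gamma law is continuous, so $G\neq G'$ almost surely. Hence $\max(G,G')^p-\min(G,G')^p>0$ almost surely, and its expectation is strictly positive. It follows that $\E[\min(G,G')^p]<\E[\max(G,G')^p]$. Combined with the identity above, this gives $\E[\min(G,G')^p]<\tfrac12\E[G^p]$.

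Since $m\ge1$, we have $\tfrac12\le m/(m+1)$, and the required inequality follows. Therefore $(1+1/m)\Lambda_{m,\delta}<\tfrac{m+1}{2m}\alpha^2\le\alpha^2$, which gives $V_m(\delta)>0$.

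\emph{Where the difficulty lies.} The only real obstacle is finding the probabilistic representation. The crude bound $\Gamma(r,x)\le\Gamma(r)$ only yields $\Lambda_{m,\delta}<\alpha^2$, which loses the factor $1+1/m$ and is not enough. The min/max symmetry argument recovers the missing factor of $2$, which is exactly what the worst case $m=1$ requires.
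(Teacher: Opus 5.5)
\textbf{There is a genuine gap in the lower bound.} Your representation $\Lambda_{m,\delta}=\alpha^2\,\E[\min(G,G')^p]/\E[G^p]$ is correct and matches the paper's \eqref{eq:heterogeneity-value}, and so is your proof of $V_m(\delta)<\alpha^2$. The key step fails, however. From $\E[\min(G,G')^p]<\E[\max(G,G')^p]$ and $\E[\min(G,G')^p]+\E[\max(G,G')^p]=2\E[G^p]$ you can only conclude $\E[\min(G,G')^p]<\E[G^p]$. The factor $\tfrac12$ is an arithmetic slip. What remains is exactly the trivial bound that you yourself dismiss as insufficient.

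Nor can the step be repaired, because $\E[\min(G,G')^p]<\tfrac12\E[G^p]$ is false for some admissible parameters. Take $m=4$ and $\delta=0$, so $r=4$ and $p=2$. Using $\PP(G>x)=e^{-x}(1+x+x^2/2+x^3/6)$, one gets $\E[\min(G,G')^2]=325/32$ and $\E[G^2]=20$, a ratio of $65/128>1/2$. More generally, the beta representation in the proof of Lemma~\ref{lem:variance-value} shows that the ratio equals $2I_{1/2}(r+p,r)$. For $\delta=0$ this is $2I_{1/2}(m+2,m)$, which tends to $1$ as $m\to\infty$. So no bound uniform in $m$ can work. Min/max symmetry holds for every continuous i.i.d.\ pair and cannot detect how the Gamma shape $r$ is tied to $m$.

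\textbf{The missing idea is to exploit $r=m(p-1)$.} The paper uses the pointwise inequality $\min(x,y)^p\le x^{p-1}y$ for $x,y>0$ and $p>1$, which is strict unless $x=y$. Independence and continuity of the Gamma law then give
\[
 \frac{\E[\min(G,G')^p]}{\E[G^p]}<\frac{\E[G^{p-1}]\,\E[G']}{\E[G^p]}
 =\frac{r\,\Gamma(r+p-1)}{\Gamma(r+p)}=\frac{r}{r+p-1}=\frac{m}{m+1},
\]
since $r+p-1=(m+1)(p-1)$. This is exactly the threshold $(1+1/m)^{-1}$ required for $V_m(\delta)>0$. It also grows with $m$, as the example above shows any valid bound must.
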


The reduction $\alpha^2-V_m(\delta)$ separates into
$\Lambda_{m,\delta}$, due to the different degree laws at different
birth times, and $\Lambda_{m,\delta}/m$, due to competition for each
new edge. Lemma~\ref{lem:variance-value} derives this decomposition
and proves that $V_m$ is continuous on $(-m,\infty)$.
The first term parallels the reduction for independent heterogeneous
observations studied by \citet{einmahl2023extreme}.

The variance also has a convenient expression in terms of the
regularized incomplete beta function. For $a,b>0$ and $0\le x\le1$,
define
\[
 I_x(a,b)=\frac{\Gamma(a+b)}{\Gamma(a)\Gamma(b)}
               \int_0^x t^{a-1}(1-t)^{b-1}\dd t.
\]
Lemma~\ref{lem:variance-value} gives
\begin{equation}\label{eq:variance-beta}
 V_m(\delta)=\frac1{p^2}
       \left\{1-2(1+m^{-1})I_{1/2}(r+p,r)\right\}.
\end{equation}
This expression permits direct evaluation of an asymptotic standard
error. To define it on every sample, set
$\widehat\delta_n=m\{1/\widehat\alpha_n(k_n)-2\}$ when
$0<\widehat\alpha_n(k_n)<1$, and set $\widehat\delta_n=0$ otherwise.
Then $\widehat\delta_n>-m$ in either case. Put
$\widehat V_n=V_m(\widehat\delta_n)$.

\begin{corollary}\label{cor:studentized}
Under the assumptions of Theorem~\ref{thm:clt},
\[
 \frac{\sqrt{k_n}\{\widehat\alpha_n(k_n)-\alpha\}}
      {\sqrt{\widehat V_n}}\convd N(0,1)\quad\hbox{in }\R.
\]
If $z_q$ is the $q$-quantile of $N(0,1)$ and $0<\gamma<1$, the
interval
\[
 \left[\widehat\alpha_n(k_n)-z_{1-\gamma/2}\sqrt{\widehat V_n/k_n},
       \widehat\alpha_n(k_n)+z_{1-\gamma/2}\sqrt{\widehat V_n/k_n}\right]
\]
contains $\alpha$ with probability tending to $1-\gamma$.
\end{corollary}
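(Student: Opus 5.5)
The studentized limit is a Slutsky argument. It rests on Theorem~\ref{thm:clt} together with continuity and positivity of the map $\delta\mapsto V_m(\delta)$ on $(-m,\infty)$. These properties come from Lemma~\ref{lem:variance-value} and Corollary~\ref{cor:variance}, and we take them as given.

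The steps run as follows.

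\begin{enumerate}
\item \emph{Consistency of the estimator.} Since $k_n\to\infty$, Theorem~\ref{thm:clt} gives
\[
 \widehat\alpha_n(k_n)-\alpha=O_p(k_n^{-1/2})=o_p(1),
\]
so $\widehat\alpha_n(k_n)\convp\alpha$.

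\item \emph{Consistency of $\widehat\delta_n$.} Because $\alpha\in(0,1)$, the event $\{0<\widehat\alpha_n(k_n)<1\}$ has probability tending to one. On that event $\widehat\delta_n=g(\widehat\alpha_n(k_n))$, where $g(a)=m(1/a-2)$ is continuous on $(0,1)$ and satisfies $g(\alpha)=\delta$. The continuous mapping theorem therefore gives $\widehat\delta_n\convp\delta$. The arbitrary value $\widehat\delta_n=0$ is used only on an event of vanishing probability, so it does not affect this conclusion.

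\item \emph{Consistency of $\widehat V_n$.} By continuity of $V_m$ at $\delta$, we get $\widehat V_n=V_m(\widehat\delta_n)\convp V_m(\delta)$. This limit is strictly positive by Corollary~\ref{cor:variance}. We also have $\widehat V_n>0$ on every sample, because $\widehat\delta_n>-m$ always. Hence the ratio is well defined, and $\sqrt{\widehat V_n}\convp\sqrt{V_m(\delta)}$.

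\item \emph{Slutsky.} Combining Theorem~\ref{thm:clt} with step~3, Slutsky's lemma gives
\[
 \frac{\sqrt{k_n}\{\widehat\alpha_n(k_n)-\alpha\}}{\sqrt{\widehat V_n}}\convd \frac{N(0,V_m(\delta))}{\sqrt{V_m(\delta)}}=N(0,1).
\]

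\item \emph{Coverage.} The event that $\alpha$ lies in the stated interval is the same as the event that the absolute value of the studentized statistic is at most $z_{1-\gamma/2}$. The standard normal law is continuous, so the boundary $\{\pm z_{1-\gamma/2}\}$ carries no mass. By the portmanteau theorem, the probability of this event converges to
\[
 \PP\bigl(|N(0,1)|\le z_{1-\gamma/2}\bigr)=1-\gamma.
\]
\end{enumerate}

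The only genuine input beyond Theorem~\ref{thm:clt} is the continuity of $V_m$ at the true $\delta$. If that were not already available from Lemma~\ref{lem:variance-value}, it is the main point to check. We would argue from~\eqref{eq:variance-beta}: $p$ and $r$ depend continuously on $\delta$. The function $I_{1/2}(a,b)$ is continuous in $(a,b)\in(0,\infty)^2$, by dominated convergence in the beta integral, since the integrand on $[0,1/2]$ is dominated locally uniformly. Continuity of the Gamma-function normalization then gives continuity of $V_m$ at every $\delta>-m$.
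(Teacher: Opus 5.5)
Your proposal is correct and follows the paper's proof. Theorem~\ref{thm:clt} gives consistency of $\widehat\alpha_n(k_n)$, hence $\widehat\delta_n\convp\delta$ with the default value used only with vanishing probability; continuity and positivity of $V_m$ (Lemma~\ref{lem:variance-value}) then give $\widehat V_n\convp V_m(\delta)>0$, and Slutsky's theorem yields both assertions. Your extra details are all sound: positivity of $\widehat V_n$ on every sample, the portmanteau step for coverage, and the fallback continuity argument via the beta representation.
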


\begin{proof}
Theorem~\ref{thm:clt} implies
$\widehat\alpha_n(k_n)\convp\alpha\in(0,1)$, so the default value of
$\widehat\delta_n$ is used with probability tending to zero and
$\widehat\delta_n\convp\delta$. Continuity and positivity of $V_m$
give $\widehat V_n\convp V_m(\delta)>0$. Slutsky's theorem proves
both assertions.
\end{proof}

\section{Proof of the central limit theorem}\label{sec:clt}
Write $k=k_n$. Except where~\eqref{eq:k-rate} is invoked, assume
only $k\to\infty$ and $k/n\to0$. We use products of the deterministic
edge denominators to normalize degree growth. An edge $e=(t,i)$
occurs at step $t$, and $W_e=W_{t,i}$. Integer endpoints refer to the
completion of all $m$ edges at that step. Products over edge times
follow chronological order. For $q\ge0$, define
\begin{equation}\label{eq:products}
 R_q(s,t)=\prod_{s<e\le t}\left(1+\frac q{W_e}\right),
 \qquad \Theta(s,t)=\log R_1(s,t),
 \qquad g(s,t)=(t/s)^\alpha.
\end{equation}
The definition of $R_q$ also allows endpoints inside a step.
For such endpoints, $s$ and $t$ in bounds involving $g$ denote the
corresponding step indices. Lemma~\ref{lem:clock} shows that
$R_q(s,t)/g(s,t)^q=1+O(s^{-1})$ for integer endpoints.
The seed vertices have birth time $b_1=b_2=2$, and $b_v=v$ for $v\ge3$.

For $x>0$ and $q\ge0$, write
$\rise{x}{q}=\Gamma(x+q)/\Gamma(x)$ for the rising factorial.
For a bounded function $f$, $\|f\|_\infty$ is its supremum norm and
$\Lip(f)$ is its Lipschitz constant when finite.

The empirical measure of large degrees is normalized by
$a_n=\beta(n/k)^\alpha$ so that its limiting tail at $x>0$ is
$x^{-p}$. Define
\begin{equation}\label{eq:empirical-measure}
 a_n=\beta(n/k)^\alpha,\qquad
 \widehat\nu_n(A)=\frac1k\sum_{v=1}^n\one\{D_v(n)/a_n\in A\},\qquad
 \nu(\dd x)=px^{-p-1}\dd x,
\end{equation}
for Borel sets $A\subset(0,\infty)$.
Write $\log_+x=\max(\log x,0)$. For a measurable function $f$, write
$\widehat\nu_nf=\int f\dd\widehat\nu_n$, $\nu f=\int f\dd\nu$, and
$S_n(f)=\sqrt{k}\{\widehat\nu_nf-\E\widehat\nu_nf\}$ whenever these
integrals exist. Define
\[
 h(x)=(\log x-\alpha)\one\{x>1\},\qquad
 \widehat x_n=D_{(k+1)}(n)/a_n.
\]
Integration of the empirical tail gives the exact decomposition
\begin{equation}\label{eq:hill-decomposition}
\begin{split}
 \sqrt{k}\{\widehat\alpha_n(k)-\alpha\}
 &=S_n(h)+\sqrt{k}\,\E\widehat\nu_nh\\
 &\quad+\sqrt{k}\left[
 \alpha\{\widehat\nu_n(1,\infty)-1\}
 -\int_1^{\widehat x_n}\widehat\nu_n(x,\infty)\frac{\dd x}{x}\right].
\end{split}
\end{equation}
Indeed, $k\widehat\alpha_n(k)=
\sum_v\log_+\{D_v(n)/D_{(k+1)}(n)\}$, including ties.
We first control the mean, then prove a centered empirical central
limit theorem, and finally handle the random threshold.

\subsection{Expected degree counts}
\label{sec:mean}

The expected numbers of vertices of each degree satisfy a linear
recursion at every edge attachment.  For $j\ge m$, put
\begin{equation}\label{eq:pi}
 \pi_j=p\beta^p\frac{\Gamma(j+\delta)}{\Gamma(j+\delta+p+1)}.
\end{equation}
Telescoping gives
\begin{equation}\label{eq:pi-tail}
 \sum_{j\ge d}\pi_j
 =\beta^p\frac{\Gamma(d+\delta)}{\Gamma(d+\delta+p)},
 \qquad d\ge m.
\end{equation}
In particular, $\sum_{j\ge m}\pi_j=1$,
$\pi_j\le Cj^{-p-1}$ and $\sum_{j\ge d}\pi_j\le Cd^{-p}$.
Write
\[
 \mathcal N_j(t)=\sum_{v\le t}\one\{D_v(t)=j\},\qquad
 \mu_j(t)=\E\mathcal N_j(t),\qquad
 e_j(t)=\mu_j(t)-t\pi_j.
\]
The next lemma controls the entire expected degree profile, including
the range of large degrees that enters the Hill estimator.

\begin{lemma}\label{lem:mean-profile}
For every $q\in[0,p)$ there is a constant $C_q$ such that
\[
 \sum_{j\ge m}\rise{(j+\delta)}{q}|e_j(t)|
 \le C_qt^{q\alpha},\qquad t\ge2.
\]
In particular,
\[
 \sum_{j\ge m}|e_j(t)|\le C,
 \qquad
 \sum_{j\ge m}j|e_j(t)|\le Ct^\alpha.
\]
\end{lemma}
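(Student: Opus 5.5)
We prove the bound by following the expected degree profile through individual edge attachments. Write $\mu_j(t,i)$ for the expected number of older vertices of degree $j$ after edge $i$ of step $t$. By~\eqref{eq:attachment}, each edge moves a vertex from degree $j-1$ to $j$ with probability proportional to $(j-1+\delta)$. Hence
\[
 \mu_j(t,i)=\mu_j(t,i-1)\Bigl(1-\frac{j+\delta}{W_{t,i}}\Bigr)
 +\mu_{j-1}(t,i-1)\frac{j-1+\delta}{W_{t,i}},
\]
and the new vertex adds $\one\{j=m\}$ at the end of the step. The sequence $t\pi_j$ solves the analogous stationary recursion up to a per-step defect, which comes from $W_{t,i}\neq(2m+\delta)t$. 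This defect has size $O(\pi_j\,j/t)$ per step and sums to $O(j\pi_j)$ in weighted norm.

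Subtracting gives a linear recursion for $e_j$ with a forcing term. Rather than bound $|e_j|$ degree by degree, we test against weights. The key identity is that $\rise{(j+\delta)}{q}$ is an approximate eigenfunction of the adjoint transition. Under one edge, a vertex of degree $j$ moving to $j+1$ changes the weight by $q\rise{(j+\delta)}{q}/(j+\delta)$, with probability $(j+\delta)/W$. So
\[
 \E\bigl[\rise{(D+1+\delta)}{q}-\rise{(D+\delta)}{q}\bigm|D\bigr]
 =\frac{q}{W}\rise{(D+\delta)}{q},
\]
which is exactly why $R_q$ in~\eqref{eq:products} appears.

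To handle the absolute value, we use the sign-splitting trick. For each $j$ choose $\sigma_j=\operatorname{sgn}e_j(t)$ at the terminal time and define the test function $f(j)=\sigma_j\rise{(j+\delta)}{q}$. We then propagate $f$ backwards through the adjoint dynamics. The backward evolution of $\rise{(\cdot+\delta)}{q}$ under the adjoint of a single edge step multiplies it by $(1+q/W_e)$, and it is monotone and positive. The propagated test function is therefore dominated in absolute value by $R_q(s,t)\rise{(j+\delta)}{q}$, which by Lemma~\ref{lem:clock} is about $(t/s)^{q\alpha}\rise{(j+\delta)}{q}$.

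Duhamel's formula then gives
\[
 \sum_j\rise{(j+\delta)}{q}|e_j(t)|
 \le \sum_j \rise{(j+\delta)}{q}|e_j(2)|\,R_q(2,t)
 +\sum_{s\le t}R_q(s,t)\sum_j\rise{(j+\delta)}{q}|\varepsilon_j(s)|,
\]
where $\varepsilon_j(s)$ is the per-step forcing. The initial term is $O(t^{q\alpha})$. We bound the forcing by $\sum_j\rise{(j+\delta)}{q}\pi_j\cdot O(1/s)$, which is finite precisely when $q<p$, since $\pi_j\asymp j^{-p-1}$. The forcing sum is then $\sum_s(t/s)^{q\alpha}s^{-1}$, giving $C_qt^{q\alpha}$ when $q\alpha>0$.

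The case $q=0$ needs separate care, because the naive bound gives $\log t$. The last display uses weight $(j+\delta)^{\overline q}$ inside the forcing but only measures $|e|$. For $q=0$ we exploit that the stationary defect is actually a discrete derivative: it has the form $(c/s)\bigl[(j-1+\delta)\pi_{j-1}-(j+\delta)\pi_j\bigr]$ plus $O(s^{-2})$. Its contribution is then contracted by the dynamics. Alternatively, we observe that $\sum_j e_j(t)=0$ exactly for $t\ge 2$, and bound the $q=0$ sum via interpolation between small $q'>0$ at low degrees and mass conservation. This $q=0$ (and generally $\log$-free) case is the main obstacle. Otherwise the "in particular" statements follow by taking $q=0$ and $q=1$, using $\rise{(j+\delta)}{1}=j+\delta\ge c\,j$ since $j\ge m>-\delta$.
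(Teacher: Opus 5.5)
There are two genuine gaps. The first is the case $q=0$, which you flag yourself. It is part of the statement: it is the unweighted bound $\sum_j|e_j(t)|\le C$.

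Write $(Af)_j=(j-1+\delta)f_{j-1}-(j+\delta)f_j$, so that the step operator is $T_t=\prod_i(I+A/W_{t,i})=I+s_1(t)A+s_2(t)A^2+\cdots$ with $s_1(t)=\alpha/t+O(t^{-2})$ and $s_2(t)=O(t^{-2})$. With the uncorrected comparison sequence $t\pi$ and the stationary identity $\alpha A\pi=\pi-\mathbf e_m$, the forcing is $\{(s-1)s_1(s)-\alpha\}A\pi+(s-1)s_2(s)A^2\pi+\cdots$. Its leading coefficients are exactly of order $1/s$, so Duhamel plus the triangle inequality only gives $\log t$.

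Your second remedy does not work. The identity $\sum_je_j(t)=0$ gives no control of $\sum_j|e_j(t)|$. The bounds you have for $q'>0$ are of size $t^{q'\alpha}$ even when restricted to bounded degrees, so no interpolation produces $O(1)$.

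Your first remedy points at the right structure, but ``contracted by the dynamics'' is not an argument. What is missing is a constant-order corrector. Replacing $t\pi$ by $v(t)=(t-c_0)\pi-d_0A\pi$ adds $-(\alpha c_0/t)A\pi-(\alpha d_0/t)A^2\pi+O(t^{-2})$ to the defect. The choices $c_0=(3m+1)/(2mp)$ and $d_0=(m-1)/(2mp)$ cancel both $t^{-1}$ coefficients. The remaining defect has weighted norm $O(t^{-2})$, which is summable for every $q\ge0$, and $v(t)-t\pi$ has bounded weighted norm. Equivalently, $s^{-1}A\pi=\alpha^{-1}(T_s-I)\pi+O(s^{-2})$, so the $s^{-1}$ part of the forcing telescopes.

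Note also that the defect size you first state, $O(j\pi_j/t)$, is weighted-summable only for $q<p-1$, which excludes $q=1$ when $\delta<0$. The bound $O(\pi_j/t)$ you later use requires the cancellation $(A\pi)_j=\alpha^{-1}(\pi_j-\one\{j=m\})$.

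The second gap is that your domination step fails on large degrees. Since $\mu_j(t)=0$ for $j>\bar d_t=m(t-1)$ while $t\pi_j>0$, both $e(t)$ and your forcing live on all $j\ge m$. But the one-edge kernel has diagonal entry $1-(j+\delta)/W_e<0$ once $j+\delta>W_e$. From such a state the absolute values of the kernel sum to $2(j+\delta)/W_e-1>1$. So the backward-propagated test function is not dominated by $R_q\rise{(j+\delta)}{q}$ there, and the weighted Duhamel estimate is unavailable on exactly the states carrying the tail of $t\pi$.

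You must restrict the comparison sequence to admissible degrees $j\le\bar d_t$. There the coefficients are nonnegative because $W_{t,i}-\{m(t-2)+i-1+\delta\}=(m+\delta)(t-2)>0$. You then pay for the truncation in two places.
\begin{itemize}
\item The mismatch comes from the $m$ source coordinates in $(\bar d_{t-1},\bar d_t]$, where the comparison sequence is $O(t^{-p})$. This gives an extra forcing of weighted size $O(t^{q-p})$ per step.
\item The discarded tail is $\sum_{j>\bar d_t}\rise{(j+\delta)}{q}t\pi_j=O(t^{q+1-p})$.
\end{itemize}
Both are absorbed into $C_qt^{q\alpha}$ precisely because $q<p$.

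With the truncation, your argument is complete for each fixed $q>0$, since $\sum_{s\le t}(t/s)^{q\alpha}s^{-1}\le C_qt^{q\alpha}$, and no corrector is needed there. Adding the corrector gives $q=0$, and the two repairs together are the paper's proof.
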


\begin{proof}
Set $\pi_j=0$ for $j<m$ and let $\mathbf e_m$ denote the unit mass at
$m$.  For sequences supported on $\{m,m+1,\ldots\}$, define
\[
 (Af)_j=(j-1+\delta)f_{j-1}-(j+\delta)f_j.
\]
Equation~\eqref{eq:pi} gives the identity
\begin{equation}\label{eq:mean-stationary}
 \alpha A\pi=\pi-\mathbf e_m.
\end{equation}
Immediately before the $i$th edge at step $t$, the total weight of the
old vertices is
$W_{t,i}=mp(t-1)-2m+i-1$.  The expectation of their degree-count vector
is therefore multiplied by $I+A/W_{t,i}$ at that edge.  After the
$m$th edge, the new vertex contributes one vertex of degree $m$.
Consequently,
\begin{equation}\label{eq:mean-count-recursion}
 \mu(t)=T_t\mu(t-1)+\mathbf e_m,
 \qquad
 T_t=\prod_{i=1}^m\left(I+\frac A{W_{t,i}}\right).
\end{equation}
All factors are polynomials in $A$, so they commute.

A constant correction makes the defect in
\eqref{eq:mean-count-recursion} summable.  Put
\[
 c_0=\frac{3m+1}{2mp},\qquad d_0=\frac{m-1}{2mp},\qquad
 v(t)=(t-c_0)\pi-d_0A\pi.
\]
Write $T_t=I+\sum_{\ell=1}^m s_\ell(t)A^\ell$, where $s_\ell(t)$
is the $\ell$th elementary symmetric polynomial in
$W_{t,1}^{-1},\ldots,W_{t,m}^{-1}$.  Direct expansion gives
\begin{align*}
 s_1(t)&=\frac\alpha t+
 \frac{\alpha+(3m+1)/(2mp^2)}{t^2}+O(t^{-3}),\\
 s_2(t)&=\frac{m-1}{2mp^2t^2}+O(t^{-3}),
 \qquad s_\ell(t)=O(t^{-\ell})\quad(\ell\ge3),
\end{align*}
where $s_2=0$ when $m=1$.  Put
$R(t)=T_tv(t-1)+\mathbf e_m-v(t)$.  Its constant term vanishes by
\eqref{eq:mean-stationary}.  The coefficients of $t^{-1}A\pi$ and
$t^{-1}A^2\pi$, respectively, are
\[
 \alpha+\frac{3m+1}{2mp^2}-\alpha(1+c_0)=0,
 \qquad
 \frac{m-1}{2mp^2}-d_0\alpha=0.
\]
Repeated use of \eqref{eq:mean-stationary} expresses every fixed
$A^\ell\pi$ as a constant multiple of $\pi$ plus a finitely supported
sequence.  Since $q<p$, these sequences have finite norm with weight
$\rise{(j+\delta)}{q}$.  These cancellations imply
\begin{equation}\label{eq:mean-profile-defect}
 \sum_{j\ge m}\rise{(j+\delta)}{q}|R_j(t)|\le C_qt^{-2}.
\end{equation}
Finitely many initial steps can be included by increasing $C_q$.

We next restrict the recursion to degrees that can occur at time $t$,
because $I+A/W_{t,i}$ need not have nonnegative entries on
arbitrarily large states.  The maximal degree at time $t$ is
$\bar d_t=m(t-1)$. Define
$v^{[t]}_j(t)=v_j(t)\one\{j\le \bar d_t\}$ and
$b(t)=\mu(t)-v^{[t]}(t)$.  Then
\[
 b(t)=T_tb(t-1)+\rho(t),\qquad
 \rho(t)=T_tv^{[t-1]}(t-1)+\mathbf e_m-v^{[t]}(t),
\]
and both $b(t)$ and $\rho(t)$ vanish above $\bar d_t$.  On $j\le \bar d_t$,
the difference between $\rho(t)$ and $R(t)$ involves only the omitted
source coordinates $\bar d_{t-1}<j\le \bar d_t$.  There are $m$ such
coordinates.  The corresponding entries of $T_t$ are bounded in
absolute value, and $v_j(t-1)=O(t^{-p})$ in this range, by
\eqref{eq:mean-stationary} and the bound on $\pi_j$.  Hence
\begin{equation}\label{eq:mean-profile-boundary}
 \sum_{j\ge m}\rise{(j+\delta)}{q}|\rho_j(t)|
 \le C_q\{t^{-2}+t^{q-p}\}.
\end{equation}

Starting from the support of $b(t-1)$, the degree before edge $i$ is
at most $m(t-2)+i-1$.  The coefficients of that edge transition are
nonnegative because
\[
 W_{t,i}-\{m(t-2)+i-1+\delta\}=(m+\delta)(t-2)>0.
\]
Moreover,
$\rise{(x+1)}{q}-\rise{x}{q}=q\rise{x}{q}/x$.
If $E_q(t)=\sum_j\rise{(j+\delta)}{q}|b_j(t)|$, positivity and
\eqref{eq:mean-profile-boundary} therefore give
\[
 E_q(t)\le
 \prod_{i=1}^m\left(1+\frac q{W_{t,i}}\right)E_q(t-1)
 +C_q\{t^{-2}+t^{q-p}\}.
\]
The product of these multipliers over steps $s+1,\ldots,t$ is at
most $C_q(t/s)^{q\alpha}$, since
$\sum_{i=1}^mW_{u,i}^{-1}=\alpha/u+O(u^{-2})$.
Iteration yields
\[
 E_q(t)\le C_qt^{q\alpha}
 \left(1+\sum_{s\ge3}s^{-q\alpha-2}
          +\sum_{s\ge3}s^{q-p-q\alpha}\right)
 \le C_qt^{q\alpha}.
\]
The second sum converges because $q-p-q\alpha<-1$ is equivalent to
$q<p$.  Finally,
\[
 \sum_{j>\bar d_t}\rise{(j+\delta)}{q}|v_j(t)|
 \le C_qt^{q+1-p}\le C_qt^{q\alpha},
\]
and $v(t)-t\pi=-c_0\pi-d_0A\pi$ has bounded weighted norm.
This proves the asserted bound for $e(t)$.  Since $p>1$, we may take
$q=1$; comparability of $j$ and $j+\delta$ for $j\ge m$ gives the
last assertion.
\end{proof}

\begin{lemma}\label{lem:mean-tail}
For every $x_0>0$ there is a constant $C_{x_0}$ such that, for all
sufficiently large $n$ and all $x\ge x_0$,
\[
 |\E\widehat\nu_n(x,\infty)-x^{-p}|
 \le C_{x_0}(a_n^{-1}+k^{-1}).
\]
For all sufficiently large $n$ and all $x\ge1$,
\[
 \E\widehat\nu_n(x,\infty)
 \le C\{x^{-p}+k^{\alpha-1}x^{-1}\}.
\]
Furthermore,
\begin{equation}\label{eq:mean-hill-centering}
 \sqrt k\,|\E\widehat\nu_nh|
 \le C\left\{\frac{\sqrt k}{a_n}
             +\frac{1+\log k}{\sqrt k}\right\}.
\end{equation}
In particular, $\sqrt k\,\E\widehat\nu_nh\to0$ if
$\sqrt k/a_n\to0$.
\end{lemma}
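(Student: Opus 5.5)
We write everything through the expected degree profile. Since $\E\widehat\nu_n(x,\infty)=k^{-1}\sum_{j>xa_n}\mu_j(n)$, we split
\[
 \E\widehat\nu_n(x,\infty)=\frac nk\sum_{j>xa_n}\pi_j+\frac1k\sum_{j>xa_n}e_j(n).
\]
Lemma~\ref{lem:mean-profile} controls the second term. For the first we use the tail formula~\eqref{eq:pi-tail} with $d=\lfloor xa_n\rfloor+1$.

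\emph{First claim.} Stirling's expansion gives
\[
 \Gamma(d+\delta)/\Gamma(d+\delta+p)=d^{-p}\{1+O(d^{-1})\},
\]
and replacing $d$ by $xa_n$ changes $d^{-p}$ by a relative error $O((xa_n)^{-1})$. Since $a_n^p=\beta^p n/k$,
\[
 \frac nk\beta^p(xa_n)^{-p}=x^{-p}.
\]
Hence the main term equals $x^{-p}\{1+O((x_0a_n)^{-1})\}$, uniformly in $x\ge x_0$, and this is at most $x^{-p}+C_{x_0}a_n^{-1}$. For the defect we bound crudely by the total variation: $k^{-1}\sum_j|e_j(n)|\le Ck^{-1}$. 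This gives $C_{x_0}(a_n^{-1}+k^{-1})$.

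\emph{Second claim.} For $x\ge1$ the main term is at most $Cx^{-p}$ by $\sum_{j\ge d}\pi_j\le Cd^{-p}$. For the defect we use the first moment bound, which gives
\[
 k^{-1}\sum_{j>xa_n}|e_j(n)|\le k^{-1}(xa_n)^{-1}\sum_j j|e_j(n)|\le Ck^{-1}(xa_n)^{-1}n^\alpha.
\]
Because $a_n=\beta(n/k)^\alpha$, this equals $Ck^{\alpha-1}x^{-1}$. More generally, using the weight $q<p$ in Lemma~\ref{lem:mean-profile} gives a defect tail bound of order $k^{q\alpha-1}x^{-q}$. We keep this for the next step.

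\emph{Third claim.} Integrate by parts:
\[
 \E\widehat\nu_nh=-\alpha\,\E\widehat\nu_n(1,\infty)+\int_1^\infty\E\widehat\nu_n(x,\infty)\frac{\dd x}x.
\]
We also have $\nu h=-\alpha+\int_1^\infty x^{-p-1}\dd x=0$. Hence
\[
 \E\widehat\nu_nh=-\alpha\{\E\widehat\nu_n(1,\infty)-1\}+\int_1^\infty\{\E\widehat\nu_n(x,\infty)-x^{-p}\}\frac{\dd x}x.
\]
The first term is $O(a_n^{-1}+k^{-1})$. For the integral we separate the $\pi$ part from the $e$ part. The $\pi$ part has error $O(x^{-p}(xa_n)^{-1})$, which integrates to $O(a_n^{-1})$.

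For the $e$ part we cannot use the uniform bound on all of $[1,\infty)$, since $\int^\infty dx/x$ diverges. Instead we split at $x=k^{\alpha}$, where $xa_n=\beta n^\alpha$, the natural maximal scale.
\begin{itemize}
 \item On $[1,k^\alpha]$ use the $k^{-1}$ bound. It integrates to $Ck^{-1}\alpha\log k$.
 \item On $[k^\alpha,\infty)$ use the weighted bound with some $q\in(1,p)$ (or $q=1$ if $p$ is near $1$; then $q=1$ gives $k^{\alpha-1}\int_{k^\alpha}^\infty x^{-2}\dd x=k^{-1}$). This contributes $O(k^{-1})$.
\end{itemize}
So the $e$ contribution is $O((1+\log k)/k)$. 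Multiplying by $\sqrt k$ gives~\eqref{eq:mean-hill-centering}, and the final statement is immediate.

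\emph{Main obstacle.} The delicate step is the uniform relative error in the Gamma-ratio asymptotics. It must hold with the discreteness of $d=\lfloor xa_n\rfloor+1$ costing only $O(a_n^{-1})$, uniformly for $x\ge x_0$ and for large $n$ such that $x_0a_n\ge m$. We handle it with the monotone bound $|\Gamma(y+\delta)/\Gamma(y+\delta+p)-y^{-p}|\le Cy^{-p-1}$ for $y\ge m$, together with a mean value estimate for $y^{-p}$ on $[xa_n,xa_n+1]$.
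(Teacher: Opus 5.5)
Your proposal is correct and follows the paper's proof. Both split $\E\widehat\nu_n(x,\infty)$ into the stationary tail $(n/k)\sum_{j>xa_n}\pi_j$, handled by the Gamma-ratio estimate, plus the defect, handled by the unweighted and first-moment bounds of Lemma~\ref{lem:mean-profile}; the only variation is that you split the integral of the logarithmic defect at $x=k^\alpha$ where the paper applies Jensen's inequality to $\sum_j|e_j(n)|\log(1+j/a_n)$, and both routes combine the same two bounds (note $q=1$ already suffices on the upper range since $p>1$, so the detour through $q\in(1,p)$ is unnecessary).
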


\begin{proof}
Let $\bar\nu_n(x)=(n/k)\sum_{j>a_nx}\pi_j$.  Applying the
gamma-ratio expansion \citep[\S5.11(iii)]{dlmf} to \eqref{eq:pi-tail} with
$d=\lfloor a_nx\rfloor+1$, and using
$(n/k)\beta^pa_n^{-p}=1$, gives
\begin{equation}\label{eq:mean-stationary-tail-bound}
 |\bar\nu_n(x)-x^{-p}|
 \le Ca_n^{-1}x^{-p-1},\qquad a_nx\ge m.
\end{equation}
Since
\[
 \E\widehat\nu_n(x,\infty)
 =\bar\nu_n(x)+\frac1k\sum_{j>a_nx}e_j(n),
\]
the unweighted bound in Lemma~\ref{lem:mean-profile} proves the first
assertion.  The first-moment bound in that lemma gives
\[
 \frac1k\sum_{j>a_nx}|e_j(n)|
 \le\frac{Cn^\alpha}{ka_nx}
 =Ck^{\alpha-1}x^{-1},
\]
which proves the second assertion.

By the integral representation of the logarithm,
\[
 \E\widehat\nu_nh
 =\int_1^\infty\bar\nu_n(x)\frac{\dd x}{x}
   -\alpha\bar\nu_n(1)
   +\frac1k\sum_{j>a_n}e_j(n)h(j/a_n).
\]
The first two terms have sum $O(a_n^{-1})$, by
\eqref{eq:mean-stationary-tail-bound} and
$\int_1^\infty x^{-p}\dd x/x=\alpha$.
For the last term, put
$B_n=\sum_j|e_j(n)|\le C$ and
$M_n=\sum_jj|e_j(n)|\le Cn^\alpha$.
When $B_n>0$, Jensen's inequality gives
\[
 \sum_j|e_j(n)|\log(1+j/a_n)
 \le B_n\log\left(1+\frac{M_n}{B_na_n}\right)
 \le C\log(1+Cn^\alpha/a_n)
 \le C(1+\log k).
\]
The middle inequality follows from the monotonicity of
$b\mapsto b\log(1+M_n/(ba_n))$.  When $B_n=0$, the same bound is
immediate.  Since
$|h(j/a_n)|\le\log(1+j/a_n)+\alpha$, the last term is
$O((1+\log k)/k)$.  This proves
\eqref{eq:mean-hill-centering} throughout $\alpha\in(0,1)$.
\end{proof}

\subsection{One-edge martingales and a variance bound}
Let $e=(t,i)$ denote an edge in chronological order, and write
$D_v(e-)=D_v(t,i-1)$, $\F_{e-}=\F_{t,i-1}$ and
$\F_e=\F_{t,i}$. For an eligible vertex, the one-degree transition at
$e$ increments its degree $d$ with probability $(d+\delta)/W_e$.
Let $P_{s,t}$ denote this one-degree transition operator between
edge or integer times. In particular, $P_{e,n}$ is its operator from immediately after $e$
to the end of step $n$. Its kernels are deterministic. For bounded
$f$, put $f_n(d)=f(d/a_n)$ and define
\[
 \xi^f_{n,e}(d)=P_{e,n}f_n(d+1)-P_{e,n}f_n(d),\qquad
 \rho_{v,e}=\frac{D_v(e-)+\delta}{W_e},\qquad
 \xi^f_{v,e}=\xi^f_{n,e}(D_v(e-)).
\]
The difference defining $\xi^f_{n,e}(d)$ is used for
$m\le d\le m(t-2)+i-1$, the admissible degrees immediately before
$e=(t,i)$. Both continuation states $d$ and $d+1$ are then admissible
immediately after $e$. When evaluating this function on a comparison
process, define $\xi^f_{n,e}(d)=0$ outside this range. This extension
does not change any quantity evaluated on the PA graph.
All sums over $v$ at $e=(t,i)$ are over $v<t$.
Write $\bar\xi^f_e=\sum_{v<t}\rho_{v,e}\xi^f_{v,e}$ and set
\begin{equation}\label{eq:variance-sums}
\begin{aligned}
 I_n(f,g)&=\frac1k\sum_e\sum_{v<t}\rho_{v,e}\xi^f_{v,e}\xi^g_{v,e},
 &K_n(f,g)&=\frac1k\sum_e\bar\xi^f_e\bar\xi^g_e,\\
 \Xi_v^f&=\sum_{e:\,t>b_v}\rho_{v,e}(1-\rho_{v,e})(\xi^f_{v,e})^2,
 &Q_n(f)&=\frac1k\sum_e\sum_{v<t}\rho_{v,e}^2(\xi^f_{v,e})^2.
\end{aligned}
\end{equation}
Here $b_v=\max(v,2)$, so the two seed vertices also start at a
deterministic degree.

\begin{lemma}\label{lem:martingale}
The Doob martingale $\E[S_n(f)\mid\F_e]$ has increments
\begin{equation}\label{eq:martingale-increment}
 \Delta^f_e=\frac1{\sqrt{k}}\sum_{v<t}
       \{\one\{J_e=v\}-\rho_{v,e}\}\xi^f_{v,e},
 \qquad |\Delta^f_e|\le4\|f\|_\infty/\sqrt{k}.
\end{equation}
Its predictable covariance with the martingale for $g$ is
$I_n(f,g)-K_n(f,g)$. Moreover,
\begin{equation}\label{eq:vertex-variance}
 \E\Xi_v^f=\Var f_n(D_v(n)),\qquad
 I_n(f,f)=\frac1k\sum_v\Xi_v^f+Q_n(f).
\end{equation}
With $c_*=r/(2mp)>0$, for every bounded measurable $f$,
\begin{equation}\label{eq:variance-domination}
 I_n(f,f)\le\frac1{c_*k}\sum_v\Xi_v^f,\qquad
 \E S_n(f)^2\le\frac1{c_*k}\sum_v\Var f_n(D_v(n))
                    \le c_*^{-1}\E\widehat\nu_nf^2.
\end{equation}
\end{lemma}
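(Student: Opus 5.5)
The plan is to identify the Doob martingale explicitly through the one-degree transition operator and then read off every claim from a single-draw multinomial computation at each edge.

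\textbf{Step 1: conditional expectations.} Because the denominators $W_e$ in~\eqref{eq:attachment} are deterministic, each single degree process $D_v(\cdot)$ is Markov with respect to the full filtration. Given $\F_{e-}$, it increments with probability $(D_v(e-)+\delta)/W_e$, which is exactly the kernel $P_{s,t}$. Hence $\E[f_n(D_v(n))\mid\F_e]=P_{e,n}f_n(D_v(e))$ for every vertex already present. For vertices not yet born, or not yet eligible, the degree starts at the deterministic value $m$ at a deterministic time. Their contributions to the Doob martingale are constants until eligibility and add no increment at $e$. This includes the seeds, since $b_1=b_2=2$ and their degree at time $2$ is $m$.

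\textbf{Step 2: the increment.} Fix $e=(t,i)$ and an eligible $v<t$. Then $D_v(e)=D_v(e-)+\one\{J_e=v\}$, and the Chapman--Kolmogorov relation across $e$ gives
\[
P_{e-,n}f_n(d)=\rho\,P_{e,n}f_n(d+1)+(1-\rho)P_{e,n}f_n(d),\qquad \rho=\rho_{v,e},\ d=D_v(e-).
\]
Subtracting this from $P_{e,n}f_n(D_v(e))$ yields $\{\one\{J_e=v\}-\rho_{v,e}\}\xi^f_{v,e}$. Summing over $v<t$ and dividing by $\sqrt k$ gives~\eqref{eq:martingale-increment}. For the bound, note $|\xi^f_{v,e}|\le2\|f\|_\infty$. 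Exactly one indicator equals one, and $\sum_{v<t}\rho_{v,e}=1$, so $|\Delta^f_e|\le(2+2)\|f\|_\infty/\sqrt k$.

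\textbf{Step 3: the predictable covariance.} Given $\F_{e-}$, the vector $(\one\{J_e=v\})_v$ is a single multinomial draw, so
\[
\Cov(\one\{J_e=v\},\one\{J_e=w\}\mid\F_{e-})=\rho_{v,e}\one\{v=w\}-\rho_{v,e}\rho_{w,e}.
\]
Bilinearity then gives $\E[\Delta^f_e\Delta^g_e\mid\F_{e-}]=k^{-1}\{\sum_v\rho_{v,e}\xi^f_{v,e}\xi^g_{v,e}-\bar\xi^f_e\bar\xi^g_e\}$. Summing over $e$ gives $I_n(f,g)-K_n(f,g)$.

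\textbf{Step 4: the vertex variance~\eqref{eq:vertex-variance}.} Apply the same argument to the single-vertex martingale $P_{e,n}f_n(D_v(e))$, $e>b_v$. It starts at a deterministic value, its increments are $(\one\{J_e=v\}-\rho_{v,e})\xi^f_{v,e}$, and its conditional increment variance is $\rho_{v,e}(1-\rho_{v,e})(\xi^f_{v,e})^2$. Orthogonality of martingale increments gives $\Var f_n(D_v(n))=\E\Xi^f_v$. The identity for $I_n(f,f)$ is just $\rho=\rho(1-\rho)+\rho^2$, summed over $e$ and $v$.

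\textbf{Step 5: domination~\eqref{eq:variance-domination}, the only real point.} We need a uniform lower bound $1-\rho_{v,e}\ge c_*$. Before $e=(t,i)$ with $t\ge3$, we have $D_v(e-)\le m(t-2)+i-1$. The computation already used in the proof of Lemma~\ref{lem:mean-profile} then gives
\[
1-\rho_{v,e}\ge\frac{(m+\delta)(t-2)}{W_{t,i}}\ge\frac{r(t-2)}{mp(t-1)}\ge\frac{r}{2mp}=c_*.
\]
Hence $\rho_{v,e}(\xi^f_{v,e})^2\le c_*^{-1}\rho_{v,e}(1-\rho_{v,e})(\xi^f_{v,e})^2$. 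Summing gives $I_n(f,f)\le(c_*k)^{-1}\sum_v\Xi^f_v$.

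Finally, for the second moment I would use Step 3 with $g=f$. The term $K_n(f,f)\ge0$, so
\[
\E S_n(f)^2=\E\{I_n(f,f)-K_n(f,f)\}\le\E I_n(f,f)\le(c_*k)^{-1}\sum_v\Var f_n(D_v(n)),
\]
using $\E\Xi^f_v=\Var f_n(D_v(n))$ from Step 4. Bounding $\Var f_n(D_v(n))\le\E f_n(D_v(n))^2$ and summing over $v$ gives $c_*^{-1}\E\widehat\nu_nf^2$.

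I expect the main care point to be Step 1. The joint dynamics are dependent, but each marginal degree process is Markov in the full filtration with a deterministic kernel, because the denominators do not depend on the configuration. That property is what makes $\xi^f_{n,e}$ a deterministic function.
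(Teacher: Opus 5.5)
Your proposal is correct and follows the paper's proof essentially step for step. The steps are the same: the conditional means $P_{e,n}f_n(D_v(e))$ from the marginal Markov property with deterministic kernels, the single categorical-trial covariance at each edge, orthogonality for the one-vertex martingale, and $\rho=\rho(1-\rho)+\rho^2$. The domination also matches, via $1-\rho_{v,e}\ge r(t-2)/W_e\ge c_*$ followed by $K_n(f,f)\ge0$. The only cosmetic difference is that you derive the lower bound on $1-\rho_{v,e}$ from the maximal admissible degree of $v$. The paper instead uses that each of the other $t-2$ eligible vertices has weight at least $r$. These give the same inequality.
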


\begin{proof}
For a vertex already born, its terminal conditional mean is
$P_{e,n}f_n(D_v(e))$. For a vertex whose birth step is not completed,
its degree when it becomes eligible is deterministically $m$, so its
terminal conditional mean is constant until then. Such vertices make
no contribution to an increment. The one-step backward recursion
therefore gives~\eqref{eq:martingale-increment}.
Conditionally on $\F_{e-}$, the endpoint indicators are one categorical
trial, with covariance
$\rho_{v,e}\one\{u=v\}-\rho_{u,e}\rho_{v,e}$.
This gives the asserted predictable covariance. Applied to the
one-vertex terminal variable, the same calculation and orthogonality
of martingale differences give the first identity
in~\eqref{eq:vertex-variance}. The second follows by writing
$\rho=\rho(1-\rho)+\rho^2$.

Every other eligible vertex has weight at least $r$. Thus, before
an edge in step $t\ge3$,
\[
 1-\rho_{v,e}\ge\frac{r(t-2)}{W_e}
       \ge\frac{r(t-2)}{mp(t-1)}\ge c_*.
\]
It follows termwise that $\rho\le c_*^{-1}\rho(1-\rho)$, which proves
the first inequality in~\eqref{eq:variance-domination}. The expected
predictable variance is $\E S_n(f)^2$ and $K_n(f,f)\ge0$.
Taking expectations and using~\eqref{eq:vertex-variance} proves the
remaining bounds.
\end{proof}

The difference $\xi^f_{n,e}(d)$ is the change in the conditional
mean of the terminal value of $f_n$ caused by one additional degree
after edge $e$. A monotone coupling of two one-degree chains controls
it. Set $\lambda_{n,e}=R_1(e,n)/a_n$, where the product is over edges
strictly after $e$. Lemma~\ref{lem:clock} gives
$\lambda_{n,e}\le C(k/t)^\alpha$ for $e=(t,i)$.

\begin{lemma}\label{lem:one-degree-difference}
For a degree $d$ admissible immediately before $e$ and bounded
Lipschitz $f$, $|\xi^f_{n,e}(d)|\le
\Lip(f)\lambda_{n,e}$, and $\xi^f_{n,e}(d)\ge0$ if $f$ is
nondecreasing. If also $f=0$ on $(0,x_0)$ for some $x_0>0$, then for every integer
$\ell\ge0$,
\begin{equation}\label{eq:xi-bound}
 |\xi^f_{n,e}(d)|\le C_\ell\Lip(f)\lambda_{n,e}
      \min\left\{1,\left(\frac{(d+1)\lambda_{n,e}}{x_0}\right)^\ell\right\}.
\end{equation}
For fixed bounded Lipschitz $f$, $Q_n(f)\to0$ in $L^1$.
\end{lemma}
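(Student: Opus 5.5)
The plan is to couple two copies of the one-degree chain from immediately after $e$, one started at $d$ and one at $d+1$. We then control the expected gap between their terminal values. The chain has increment probability $(D+\delta)/W_{e'}$ at each later edge $e'$, and this is nondecreasing in $D$. So we realise both chains with a common sequence of uniforms $U_{e'}$: a chain at degree $D$ increments iff $U_{e'}\le (D+\delta)/W_{e'}$. This keeps $X\le Y$ at all times, and the gap $Y-X$ grows by one at $e'$ exactly when $U_{e'}$ falls between the two thresholds, an event of probability $(Y-X)/W_{e'}$. Conditioning successively, $\E[Y-X]$ at time $n$ equals $\prod_{e<e'\le n}(1+1/W_{e'})=R_1(e,n)$. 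Admissibility of $d$ guarantees that both chains stay in the admissible range, so the thresholds are at most one (this is the nonnegativity computation $W_{t,i}-\{m(t-2)+i-1+\delta\}>0$ from the proof of Lemma~\ref{lem:mean-profile}). Hence
\[
 |\xi^f_{n,e}(d)|\le \Lip(f)\,\E|Y_n-X_n|/a_n=\Lip(f)\lambda_{n,e}.
\]
If $f$ is nondecreasing, then $f_n(Y_n)\ge f_n(X_n)$ pointwise, which gives $\xi\ge0$.

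For~\eqref{eq:xi-bound}, note that $f_n(Y_n)-f_n(X_n)=0$ unless $Y_n\ge a_nx_0$, since $f=0$ below $x_0$. So
\[
 |\xi|\le \Lip(f)a_n^{-1}\E\bigl[(Y_n-X_n)\one\{Y_n\ge a_nx_0\}\bigr]
 \le \Lip(f)a_n^{-1}\E[(Y_n-X_n)Y_n^\ell]/(a_nx_0)^\ell.
\]
It remains to bound $\E[(Y_n-X_n)Y_n^\ell]\le C_\ell R_1(e,n)\{(d+1)R_1(e,n)\}^\ell$. We do this by tracking the rising-factorial moments $\E[(Y-X)\rise{(Y+\delta)}{\ell}]$ jointly through the recursion. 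At each edge the one-step conditional expectation of $(Y-X)\rise{(Y+\delta)}{\ell}$ is multiplied by a factor of the form $1+(\ell+1)/W_{e'}$, plus lower-order cross terms. These come from simultaneous increments of the gap and of $Y$ (the gap increment forces $Y$ to increment), and they are dominated by terms of the same type with $\ell$ replaced by smaller exponents. Induction on $\ell$, combined with Lemma~\ref{lem:clock} in the form $R_{q}(e,n)\le C R_1(e,n)^q$, then gives the bound $C_\ell R_1(e,n)^{\ell+1}(d+1+|\delta|)^\ell$. Finally, the bound is $\le \Lip(f)\lambda_{n,e}$ anyway by the first part, which supplies the $\min\{1,\cdot\}$. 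The main obstacle is this joint moment induction, specifically organising the cross terms so that only $R_q$ products with $q\le\ell+1$ appear. If the bookkeeping becomes messy, I would fall back on a cruder argument: apply Cauchy--Schwarz with $\E(Y-X)^2\le CR_2$ and $\E Y^{2\ell}\le C\rise{(d+1+\delta)}{2\ell}R_{2\ell}$. Since $\E(Y-X)^2\le CR_1^2$ as well, this gives the same order.

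For $Q_n(f)\to0$ in $L^1$, use $\rho_{v,e}\le\sum_u\rho_{u,e}=1$, so that
\[
 \E Q_n(f)\le \frac1k\sum_e \E\max_{v}\rho_{v,e}\sum_v\rho_{v,e}(\xi^f_{v,e})^2 .
\]
We split at edges with $t\le T$ and $t>T$, using $\lambda_{n,e}\le C(k/t)^\alpha$. For the early steps $t\le T$, there are $O(T)$ edges, each term being at most $C\|f\|_\infty^2$, giving a contribution $O(T/k)$. For the late steps, $|\xi|\le C\Lip(f)(k/t)^\alpha$ and $\max_v\rho_{v,e}\le C\max_vD_v(t)/t$, where $\E\max_vD_v(t)^2\le Ct^{2\alpha}$ by the moment bound implicit in Lemma~\ref{lem:clock}. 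The contribution is therefore $O\bigl(k^{-1}\sum_{t}(k/t)^{2\alpha}t^{\alpha-1}\bigr)$, which is $o(1)$ after also using the $\min$ factor in~\eqref{eq:xi-bound} to cut off $t\gg k$. Choosing $T=o(k)$ suitably finishes the proof.
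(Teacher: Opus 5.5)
Your treatment of the first two claims follows the paper. The common-uniform coupling gives $\E(Y_n-X_n)=R_1(e,n)$. For \eqref{eq:xi-bound}, your fallback is essentially the paper's argument: Cauchy--Schwarz with $\E(Y_n-X_n)^2\le 2R_2(e,n)\le CR_1(e,n)^2$, plus the $2\ell$th moment of $Y_n$. The paper instead applies Cauchy--Schwarz to the indicator and then Markov's inequality with the $2\ell$th moment, which is the same thing. Your primary joint-moment induction also closes. With $G=Y-X$, the one-step conditional mean of $G\rise{(Y+\delta)}{\ell}$ is exactly $G\rise{(Y+\delta)}{\ell}\{1+(\ell+1)/W\}+\ell\,G\rise{(Y+\delta+1)}{\ell-1}/W$. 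Summing the inhomogeneous term against $R_{\ell+1}(e',n)$ gives order $R_1(e,n)^{\ell+1}(d+1)^{\ell-1}$. It buys nothing over Cauchy--Schwarz, though.

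The genuine difference is in $Q_n(f)\to0$. The paper splits by vertex label. For $v<\varepsilon k$ it uses $\rho^2\le c_*^{-1}\rho(1-\rho)$ and $\E\Xi^f_v=\Var f_n(D_v(n))\le\|f\|_\infty^2$ from \eqref{eq:vertex-variance}. For $v\ge\varepsilon k$ it uses the Lipschitz bound with second degree moments. This gives $\E Q_n(f)\le C\{\varepsilon+k^{-1}+\varepsilon^{-2\alpha}/k\}$, followed by a double limit. You split by time instead, using $\sum_v\rho_{v,e}^2(\xi^f_{v,e})^2\le\Lip(f)^2\lambda_{n,e}^2\max_v\rho_{v,e}$ and a bound on the maximal degree. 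This gives $\E Q_n(f)=O(T/k+k^{2\alpha-1}T^{-\alpha})$. Your route avoids the martingale variance identity and $c_*$ and yields an explicit rate. The paper's route needs only second moments of individual degrees, not control of the maximum.

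Three points need fixing in your write-up:
\begin{itemize}
\item The bound $\E\max_vD_v(t)\le Ct^\alpha$ is not implicit in Lemma~\ref{lem:clock}. Take $q$ with $q\alpha>1$ and use Lemma~\ref{lem:moments}: $\E\max_vD_v(t)^q\le\sum_v\E D_v(t)^q\le C_qt^{q\alpha}\sum_vb_v^{-q\alpha}\le C_qt^{q\alpha}$, then apply Lyapunov's inequality.
\item No cutoff of $t\gg k$ via the $\min$ factor is needed, since $\sum_{t>T}t^{-1-\alpha}<\infty$. This matters, because \eqref{eq:xi-bound} requires $f=0$ near the origin, and the $Q_n$ claim does not assume that.
\item Make ``$T=o(k)$ suitably'' explicit: take $T=k^\gamma$ with $\max\{0,2-1/\alpha\}<\gamma<1$. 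This interval is nonempty because $\alpha<1$.
\end{itemize}
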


\begin{proof}
Start one-degree chains $Y,Y'$ just after $e$ at $d,d+1$ and use the
same uniforms at all future edges. Their order is preserved. Their
difference increases by one with conditional probability
$(Y'-Y)/W$, so
$\E(Y'_n-Y_n)=R_1(e,n)$ and
$\E(Y'_n-Y_n)^2\le C R_1(e,n)^2$ by the factorial-moment identity.
The Lipschitz and monotonicity assertions follow. If $f$ vanishes
below $x_0$, the terminal difference of the two test functions is zero
unless $Y'_n\ge x_0a_n$. The moment estimate of
Lemma~\ref{lem:moments}, followed by Cauchy--Schwarz, bounds its mean
by
\[
 \frac{\Lip(f)}{a_n}
 \{\E(Y'_n-Y_n)^2\}^{1/2}
 \PP(Y'_n\ge x_0a_n)^{1/2},
\]
and the moment of order $2\ell$ gives~\eqref{eq:xi-bound}.

Fix $\varepsilon>0$. For labels $v<\varepsilon k$,
$\rho^2\le c_*^{-1}\rho(1-\rho)$ and
$\E\Xi_v^f\le\|f\|_\infty^2$ give a contribution to
$\E Q_n(f)$ bounded by $C(\varepsilon+k^{-1})$.
For $v\ge\varepsilon k$, the Lipschitz estimate and the second degree
moment give, at $e=(t,i)$,
\[
 \E[\rho_{v,e}^2(\xi^f_{v,e})^2]
 \le Ct^{-2}(t/v)^{2\alpha}(k/t)^{2\alpha}
 =Ck^{2\alpha}v^{-2\alpha}t^{-2}.
\]
Summing over $t>v$, over the $m$ edges in each step, and then over
$v\ge\varepsilon k$ yields
\[
 \E Q_n(f)\le C\{\varepsilon+k^{-1}+\varepsilon^{-2\alpha}/k\}.
\]
Let $n\to\infty$ and then $\varepsilon\downarrow0$.
\end{proof}

\subsection{The limiting covariance}
The normalized degree of a vertex born at time close to $sk$ has
limiting law
$X_s=\beta^{-1}s^{-\alpha}G_r$. Its law determines the diagonal
part of the covariance. For a test function $f$, put
\begin{equation}\label{eq:omega}
 \omega_f(s)=r\,\E\{f(\beta^{-1}s^{-\alpha}G_{r+1})-f(X_s)\},
 \qquad \Omega_f(u)=\int_0^u\omega_f(s)\dd s.
\end{equation}
Define the covariance functional
\begin{equation}\label{eq:covariance-functional}
 V(f,g)=\int_0^\infty\Cov(f(X_s),g(X_s))\dd s
       -\frac1{mp^2}\int_0^\infty
                    \frac{\Omega_f(u)\Omega_g(u)}{u^2}\dd u.
\end{equation}

We record its integrability before taking limits. For measurable
$\varphi\ge0$ and $a>0$, changing variables at a fixed value of $G_a$
gives
\begin{equation}\label{eq:gamma-integral}
 \int_0^\infty\E\varphi(\beta^{-1}s^{-\alpha}G_a)\dd s
       =\beta^{-p}\E G_a^p\,\nu\varphi.
\end{equation}
In particular $\int_0^\infty\E f(X_s)^2\dd s=\nu f^2$.
For fixed $C_0,x_0>0$, if $|f|$ is bounded by
$F_0(x)=C_0(1+\log_+x)\one\{x\ge x_0\}$, then
\begin{equation}\label{eq:omega-bound}
 |\Omega_f(u)|\le C\min\{u(1+\log_+(1/u)),1\}.
\end{equation}
For $u\le1$, this follows from
$\log_+(\beta^{-1}s^{-\alpha}y)
\le\alpha\log(1/s)+\log_+(y/\beta)$. For $u\ge1$ it follows
from~\eqref{eq:gamma-integral} applied to both Gamma variables.
The absolute value of the first integrand
in~\eqref{eq:covariance-functional} is bounded by
$2\E F_0(X_s)^2$, whose integral is finite
by~\eqref{eq:gamma-integral}. The second integrand is bounded by
$C\min\{(1+\log_+(1/u))^2,u^{-2}\}$, which is integrable on
$(0,\infty)$. These bounds prove finiteness for $|f|,|g|\le F_0$.
They also show by dominated convergence that
$V(f_j,f_j)\to V(f,f)$ if $|f_j|\le F_0$ and $f_j\to f$
Lebesgue almost everywhere. The Gamma laws have densities, so null
sets cause no exception inside the expectations.

\begin{lemma}\label{lem:vertex-variance-average}
Let $x_0>0$ and let $f$ be bounded and Lipschitz and vanish on $(0,x_0)$.
Then, in probability in $\R$,
\[
 \frac1k\sum_v\Xi_v^f\longrightarrow
             \int_0^\infty\Var f(X_s)\dd s.
\]
\end{lemma}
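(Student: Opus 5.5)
The plan is to show first that the mean converges, $\E\frac1k\sum_v\Xi_v^f\to\int_0^\infty\Var f(X_s)\dd s$, and then that the variance of $\frac1k\sum_v\Xi_v^f$ tends to zero. Together these give convergence in $L^2$, hence in probability.

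\emph{Mean.} By~\eqref{eq:vertex-variance}, $\E\Xi_v^f=\Var f_n(D_v(n))$, so the mean is $\frac1k\sum_v\Var f(D_v(n)/a_n)$. I would write this as a Riemann sum in $s=v/k$ and split the labels into three ranges.
\begin{itemize}
\item \emph{Early labels, $v<\varepsilon k$.} The summand is at most $\|f\|_\infty^2$, so this range contributes at most $C\varepsilon$. This matches $\int_0^\varepsilon\Var f(X_s)\dd s\le\|f\|_\infty^2\varepsilon$.
\item \emph{Late labels, $v>Mk$.} I would use $\Var f_n(D_v(n))\le \|f\|_\infty^2\PP(D_v(n)\ge x_0a_n)$. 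The moment bound of Lemma~\ref{lem:moments}, of order $q>p$, gives $\PP(D_v(n)\ge x_0a_n)\le C(k/v)^{q\alpha}x_0^{-q}$. Since $q\alpha>1$, this range contributes $O(M^{1-q\alpha})$, and $\int_M^\infty \PP(X_s\ge x_0)\dd s$ is small in the same way.
\item \emph{Middle labels, $\varepsilon k\le v\le Mk$.} I would use the pure-birth comparison from the appendix. Started at degree $m$ at time $v$, the normalized degree $D_v(n)/a_n$ is close in law to $\beta^{-1}(v/k)^{-\alpha}G_r$. The clock normalization in Lemma~\ref{lem:clock} gives $R_1(v,n)\sim (n/v)^\alpha$, and $a_n=\beta(n/k)^\alpha$; the Gamma limit of a Yule-type process with shape $m+\delta=r$ absorbs the constant $\beta$. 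This convergence in law should be uniform for $s=v/k\in[\varepsilon,M]$. Since $f$ is bounded and continuous, $\Var f(D_v(n)/a_n)\to\Var f(X_s)$ uniformly there, and the Riemann sum converges to $\int_\varepsilon^M\Var f(X_s)\dd s$ by continuity of $s\mapsto\Var f(X_s)$.
\end{itemize}
Letting $n\to\infty$, then $\varepsilon\to0$ and $M\to\infty$, gives the mean.

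\emph{Concentration.} $\Xi_v^f$ depends on the whole trajectory of $D_v$, and the trajectories of different vertices are dependent. The same three ranges handle the outer labels directly, and they are controlled in $L^1$ by the same bounds.
\begin{itemize}
\item For $v<\varepsilon k$, we have $\E\Xi_v^f\le\|f\|_\infty^2$, so this range is $O(\varepsilon)$ in $L^1$.
\item For $v>Mk$, the bound~\eqref{eq:xi-bound} gives $\xi$ small unless $(d+1)\lambda_{n,e}$ is of order $x_0$. Combined with the degree moments, this range is $O(M^{1-q\alpha})$ in $L^1$.
\end{itemize}
For the middle range I would use the independent pure-birth comparison again. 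Couple the degree processes of the $O(k)$ vertices with labels in $[\varepsilon k,Mk]$ to independent birth processes with the deterministic clock. Each $\Xi_v^f$ is then replaced by a functional of an independent chain, because $\xi^f_{n,e}$ is a deterministic function of the current degree and $\rho_{v,e}$ is deterministic given that degree. The sum of $O(k)$ independent bounded terms, divided by $k$, has variance $O(1/k)$.

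The coupling error must be shown to be $o(1)$ in $L^1$ after averaging. The Lipschitz bound $|\xi|\le\Lip(f)\lambda_{n,e}$, with $\lambda_{n,e}\le C(k/t)^\alpha$, together with $\rho_{v,e}\le Ct^{-1}(t/v)^\alpha$, makes each summand of $\Xi_v^f$ at most $Ck^{2\alpha}v^{-\alpha}t^{-1-\alpha}$. Thus $\Xi_v^f$ is a Lipschitz functional of the trajectory, with sensitivity decaying in $t$. Alternatively, as a fallback, I would bound $\Cov(\Xi_u^f,\Xi_v^f)$ for $u\ne v$ directly through the weak competition dependence. That covariance should be $O(1/k)$ per pair, giving total variance $O(\varepsilon^{-1}M/k)$.

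\emph{Main obstacle.} I expect the main obstacle to be this decoupling step: justifying that the dependence induced by competition for edges does not prevent a law of large numbers for the quadratic functionals $\Xi_v^f$. In particular, the comparison with independent pure-birth processes must hold uniformly along whole trajectories, and not only at the terminal time.
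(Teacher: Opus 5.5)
Your treatment of the mean, and your $L^1$ control of the labels below $\varepsilon k$ and above $Mk$, agree with the paper. For the late labels you do not need~\eqref{eq:xi-bound}: since $\Xi_v^f\ge0$ and $\E\Xi_v^f=\Var f_n(D_v(n))$, the mean bound is already the $L^1$ bound.

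The gap is the concentration of the middle block. Your primary route couples all order-$k$ vertices with labels in $[\varepsilon k,Mk]$ simultaneously with independent comparison processes. This does not work, and the obstruction is structural. In the graph exactly one edge is placed at each $e$. At steps $t$ of order $k$ these vertices carry a fixed fraction of $W_e$, so their independent negative-binomial increments place two or more edges with probability bounded below. Under any coupling, order-$k$ of the edges before $Tk$ therefore produce mismatched increments, spread over a positive fraction of the block. In Lemma~\ref{lem:one-edge} this appears as the bound $y(y+1)W_e^{-2}\asymp1$ per edge, and~\eqref{eq:finite-coupling} is accordingly stated only for a fixed number $h$ of labels. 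Such a coupling gives no control of the error in $k^{-1}\sum_v\Xi_v^f$ without a separate cancellation argument, which you do not supply. The $\Xi_v^f$ are also unbounded; only their moments are bounded.

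Your fallback, a pairwise bound on $\Cov(\Xi_u^f,\Xi_v^f)$, is the route the paper takes, but the proposal gives no mechanism for it. The obstacle you name, a comparison along whole trajectories, is exactly what is unavailable. For labels of order $k$, the mismatch bound~\eqref{eq:finite-coupling} up to time $u$ contains $\sum_{b_v<t\le u}g(b_v,t)^2t^{-2}$, which diverges as $u\to\infty$ when $\alpha\ge1/2$. So no useful coupling up to time $n$ is available in general. The missing steps are as follows.
\begin{enumerate}
\item Truncate the sum defining $\Xi_v^f$ at steps $t\le Tk$. The summand bound $C(D_v(e-)+\delta)t^{-1}(k/t)^{2\alpha}$, Minkowski's inequality and fourth degree moments give $\sup_{\varepsilon k\le v\le Mk}\|\Xi_v^f\|_4\le C_\varepsilon$ and $\|\Xi_v^f-\Xi_v^{f,T}\|_4\le C_\varepsilon T^{-\alpha}$.
\item Couple only the two paths $D_u,D_v$ up to $Tk$ with independent comparison paths; the failure probability is at most $C_{\varepsilon,M,T}/k$.
\item Define the comparison sums with the zero extension of $\xi^f_{n,e}$, because comparison paths may leave the admissible range, and bound their fourth moments by Lemma~\ref{lem:moments}.
\item H\"older's inequality then gives $|\Cov(\Xi_u^{f,T},\Xi_v^{f,T})|\le C_{\varepsilon,M,T}k^{-1/2}$. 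Hence $\limsup_n\sup_{u\ne v}|\Cov(\Xi_u^f,\Xi_v^f)|\le C_{\varepsilon,M}T^{-\alpha}$, and letting $T\to\infty$ makes the off-diagonal covariances $o(1)$ uniformly.
\end{enumerate}
Since the diagonal terms contribute $O(1/k)$, this suffices. Your asserted rate $O(1/k)$ per pair is neither needed nor established.
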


\begin{proof}
Fix $0<\varepsilon<L<\infty$. Along any sequence of labels with
$v/k\to s\in[\varepsilon,L]$, Lemma~\ref{lem:degree-limit} gives
$D_v(n)/a_n\convd X_s$, and $s\mapsto\Var f(X_s)$ is continuous.
Since every sequence in $[\varepsilon,L]$ has a convergent
subsequence, it follows that
\begin{equation}\label{eq:uniform-marginal-variance}
 \sup_{\varepsilon k\le v\le Lk}
 |\Var f_n(D_v(n))-\Var f(X_{v/k})|\longrightarrow0.
\end{equation}
For an integer $q$ with $q\alpha>1$, the degree moments give
\[
 \frac1k\sum_{v>Lk}\Var f_n(D_v(n))
 \le\frac Ck\sum_{v>Lk}(k/v)^{q\alpha}
 \le CL^{1-q\alpha}.
\]
The labels $v<\varepsilon k$ contribute at most
$C(\varepsilon+k^{-1})$ in expectation.

For labels $u\ne v$ in $[\varepsilon k,Lk]$ we show that
$\Cov(\Xi_u^f,\Xi_v^f)\to0$ uniformly. At $e=(t,i)$,
\[
 0\le\rho_{v,e}(1-\rho_{v,e})(\xi^f_{v,e})^2
        \le C(D_v(e-)+\delta)t^{-1}(k/t)^{2\alpha}.
\]
Minkowski's inequality and fourth degree moments therefore give
$\sup_{\varepsilon k\le v\le Lk}\|\Xi_v^f\|_4\le C_\varepsilon$.
For fixed $T>L$, let $\Xi_v^{f,T}$ be the sum defining $\Xi_v^f$
restricted to steps $t\le Tk$. Then
\begin{equation}\label{eq:vertex-variance-time-tail}
 \sup_{\varepsilon k\le v\le Lk}
 \|\Xi_v^f-\Xi_v^{f,T}\|_4
 \le C_\varepsilon k^\alpha\sum_{t>Tk}t^{-\alpha-1}
 \le C_\varepsilon T^{-\alpha}.
\end{equation}
Up to time $Tk$, the coupling bound~\eqref{eq:finite-coupling}
couples two degree paths with independent comparison paths, with
failure probability at most $C_{\varepsilon,L,T}/k$.
Let $\widetilde\Xi_v^{f,T}$ denote the sum obtained by replacing
$D_v(e-)$ with $\widetilde D_v(e-)$, using the zero extension of
$\xi^f_{n,e}$. Each summand is nonnegative and bounded by
$C\{\widetilde D_v(e-)+\delta\}t^{-1}(k/t)^{2\alpha}$.
Lemma~\ref{lem:moments} and Minkowski's inequality therefore give
the same uniform fourth-moment bound for these sums.
The variables $\widetilde\Xi_u^{f,T}$ and
$\widetilde\Xi_v^{f,T}$ are independent because they are deterministic
functions of independent comparison paths.

The products of the two original variables and of the two comparison
variables agree whenever the coupling succeeds. H\"older's inequality
bounds the difference of their expectations by
$C_{\varepsilon,L,T}k^{-1/2}$, using the bounded second moments of the products
and the failure probability $C_{\varepsilon,L,T}/k$.
The individual means differ by at most
$C_{\varepsilon,L,T}k^{-3/4}$, using the fourth moments.
Subtracting the products of means consequently gives
\begin{equation}\label{eq:coupling-covariance}
 \sup_{\substack{u\ne v\\u,v\in[\varepsilon k,Lk]}}
 |\Cov(\Xi_u^{f,T},\Xi_v^{f,T})|
 \le C_{\varepsilon,L,T}k^{-1/2}.
\end{equation}
Restoring the tails
in~\eqref{eq:vertex-variance-time-tail} yields
\[
 \limsup_n\sup_{\substack{u\ne v\\u,v\in[\varepsilon k,Lk]}}
       |\Cov(\Xi_u^f,\Xi_v^f)|\le C_{\varepsilon,L}T^{-\alpha}.
\]
Letting $T\to\infty$ makes this bound zero.
The diagonal terms in the variance of
$k^{-1}\sum_{\varepsilon k\le v\le Lk}\Xi_v^f$ are bounded by
$C_{\varepsilon,L}/k$, and the off-diagonal terms tend to zero by
the uniform covariance bound. Its mean converges by
\eqref{eq:vertex-variance} and~\eqref{eq:uniform-marginal-variance}
to $\int_\varepsilon^L\Var f(X_s)\dd s$. Finally let
$\varepsilon\downarrow0$ and $L\to\infty$, using the nonnegative
tail bounds and~\eqref{eq:gamma-integral}.
\end{proof}

The second term of~\eqref{eq:covariance-functional} is the limit of
$K_n(f,f)$. To identify it, set
$\zeta^f_{v,e}=(D_v(e-)+\delta)\xi^f_{v,e}$, so that
$\bar\xi^f_e=W_e^{-1}\sum_{v<t}\zeta^f_{v,e}$.

\begin{lemma}\label{lem:weighted-average}
For $x_0>0$, bounded Lipschitz $f$ vanishing on $(0,x_0)$ and
$0<u_0<u_1<\infty$,
\[
 \max_{1\le i\le m}\sup_{u_0\le u\le u_1}
 \E\left[\bar\xi^f_{(\lfloor uk\rfloor,i)}
                  -\frac{\Omega_f(u)}{mpu}\right]^2\longrightarrow0.
\]
For all edges $e=(t,i)$,
$\E(\bar\xi^f_e)^2\le C\min\{1,(k/t)^2\}$.
Consequently, in $L^1$,
\[
 K_n(f,f)\longrightarrow
           \frac1{mp^2}\int_0^\infty\Omega_f(u)^2u^{-2}\dd u.
\]
\end{lemma}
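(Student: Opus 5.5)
The plan is to prove the pointwise $L^2$ approximation first, then the uniform bound, and to obtain the $L^1$ limit of $K_n(f,f)$ by summing over edges, splitting the time axis into early, bulk and late ranges.

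\emph{Step 1: rewriting $\bar\xi^f_e$.} Write $\bar\xi^f_e=W_e^{-1}\sum_{v<t}\zeta^f_{v,e}$, where $W_e\sim mpt$. At $e=(\lfloor uk\rfloor,i)$ the claim then reduces to
\[
 \frac1k\sum_{v<t}\zeta^f_{v,e}\to\Omega_f(u)\quad\hbox{in }L^2.
\]
The function $\zeta^f_{n,e}(d)=(d+\delta)\{P_{e,n}f_n(d+1)-P_{e,n}f_n(d)\}$ is deterministic. For a one-degree chain with jump rate proportional to $d+\delta$, this quantity is a derivative in the direction of the Yule clock.
\begin{itemize}
\item For a vertex born at $v\approx sk$ and observed at time $uk$, the degree is approximately $(u/s)^\alpha G_r$ (in rate-$r$ units).
\item The quantity $(D+\delta)\,\E_e[f(\text{terminal}\mid D+1)-f(\text{terminal}\mid D)]$ converges to $r\,\E\{f(\beta^{-1}s^{-\alpha}G_{r+1})-f(\beta^{-1}s^{-\alpha}G_r)\}=\omega_f(s)$.
\item The size-biasing identity behind this is $\E[(G+\cdot)\,\varphi]$: for the terminal law, multiplying by the degree size-biases the Gamma law from shape $r$ to shape $r+1$. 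This comes from the Pólya urn / Yule structure and is independent of $u$.
\end{itemize}
To justify the convergence, I would use Lemma~\ref{lem:degree-limit} for the joint limit of $D_v(e-)$ and the terminal conditional law, and the moment bounds of Lemma~\ref{lem:moments} for uniform integrability. The bounds \eqref{eq:xi-bound} give $|\zeta|\le C(d+\delta)\lambda_{n,e}$, with $\lambda_{n,e}\le C(k/t)^\alpha$, and this dominates.

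\emph{Step 2: mean and variance of the sum.} For the mean, sum $\E\zeta^f_{v,e}$ over $v\le uk$. Uniform convergence on $v/k\in[\varepsilon,u]$ gives the Riemann sum of $\omega_f$, hence $\Omega_f(u)$. Labels $v<\varepsilon k$ contribute $O(\varepsilon)$, using $|\zeta|\le \|f\|_\infty$-type bounds: $(d+\delta)\xi$ is bounded by twice the maximal oscillation times a factor from Lemma~\ref{lem:one-degree-difference}, or by $\E|\zeta|\le C$ via the $\ell$-bound. For the variance, cross-covariances of $\zeta_{u'}$ and $\zeta_v$ must vanish uniformly. I would reuse the coupling with independent comparison paths from the proof of Lemma~\ref{lem:vertex-variance-average}, namely \eqref{eq:finite-coupling} up to time $uk$. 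Its failure probability is $C/k$, and Hölder with fourth moments gives covariance $O(k^{-1/2})$. The diagonal terms are $O(1/k)$. This yields the $L^2$ convergence, and uniformity in $u\in[u_0,u_1]$ and $i$ follows because all constants are uniform on compacts and $\Omega_f$ is continuous.

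\emph{Step 3: the uniform bound.} Use $|\bar\xi^f_e|\le \sum_v\rho_{v,e}|\xi^f_{v,e}|\le \max_d|\xi^f_{n,e}(d)|\le 2\|f\|_\infty$, which gives the bound $1$. For $t\gg k$, use Lipschitz: $|\bar\xi|\le \Lip(f)\lambda_{n,e}\le C(k/t)^\alpha$. This is not yet $(k/t)$, so improve it with \eqref{eq:xi-bound} and the vanishing of $f$ below $x_0$. Only vertices with $(d+1)\lambda_{n,e}\gtrsim x_0$ contribute, i.e. $d\gtrsim (t/k)^\alpha$. Their total weight has expectation
\[
 W_e^{-1}\sum_v\E(D_v+\delta)\one\{D_v\ge c(t/k)^\alpha\}\le C(k/t)^{\alpha(p-1)}
\]
by the tail of the degree profile (Lemma~\ref{lem:mean-profile}), with the $\ell$-powers handling the rest. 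Combined with the factor $\lambda$, this gives $(k/t)^{\alpha p}=(k/t)$ in $L^1$; squaring and using Cauchy--Schwarz with higher moments gives $(k/t)^2$.

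\emph{Step 4: the limit of $K_n$.} Now $K_n(f,f)=k^{-1}\sum_e(\bar\xi_e)^2$ with $m$ edges per step. The range $t\in[u_0k,u_1k]$ is a Riemann sum converging in $L^1$ to
\[
 m\int_{u_0}^{u_1}\Omega_f(u)^2(mpu)^{-2}\dd u=\frac1{mp^2}\int_{u_0}^{u_1}\Omega_f(u)^2u^{-2}\dd u.
\]
The tails $t<u_0k$ and $t>u_1k$ contribute $O(u_0)$ and $O(1/u_1)$ in expectation by Step 3. Sending $u_0\to0$ and $u_1\to\infty$, using integrability from \eqref{eq:omega-bound}, finishes the proof.

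The main obstacle is the identification in Step 1 of the limit of $(d+\delta)\xi$ with the size-biased Gamma difference. This requires uniform local convergence of the conditional terminal law from time $uk$ onward, beyond plain distributional convergence of $D_v(e-)$; I would obtain it by coupling with pure-birth processes.
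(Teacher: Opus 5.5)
Your outer structure matches the paper: Riemann sums for the mean, decorrelation of distinct labels through \eqref{eq:finite-coupling} with an $O(k^{-1/2})$ covariance bound, and Riemann sums for $K_n$ after cutting at $u_0k$ and $u_1k$. However, the step you flag as the main obstacle is a genuine gap, and the heuristic behind it is wrong.

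For $v\approx sk$ and $e=(\lfloor uk\rfloor,i)$ with $u/s$ bounded, $D_v(e-)$ is not close to $(u/s)^\alpha G_r$. It is tight and integer valued. By \eqref{eq:kernel-comparison} and \eqref{eq:birth-law}, its limit law is that of $B^{(\delta,m)}(\theta)$ with $\theta=\alpha\log(u/s)$, which is negative binomial. Lemma~\ref{lem:degree-limit} does not apply to $D_v(e-)$, since that lemma needs the time ratio to diverge. Your two-stage limit would therefore produce $\E[(B+\delta)\{g(B+1)-g(B)\}]$ with $B=B^{(\delta,m)}(\theta)$ and $g(d)=\E f(\beta^{-1}u^{-\alpha}G_{d+\delta})$. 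Nothing in the proposal shows that this equals $\omega_f(s)$ for every $u$; invoking size-biasing and P\'olya urns does not do it.

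The missing idea is an exact identity on the graph itself. Put $\mathcal B\varphi(d)=(d+\delta)\{\varphi(d+1)-\varphi(d)\}$. Then $\zeta^f_{v,e}=\mathcal B(P_{e,n}f_n)(D_v(e-))$. Every one-edge transition is $I+\mathcal B/W_{e'}$, so $\mathcal B$ commutes with the transitions before $e$, and
\[
 \E\zeta^f_{v,e}=r\{P^{\setminus e}_{v,n}f_n(m+1)-P^{\setminus e}_{v,n}f_n(m)\},
\]
which is \eqref{eq:weighted-identity}. Omitting $e$ costs $O(k^{-1})$ under a common-uniform coupling. Lemma~\ref{lem:degree-limit} with fixed starts $m$ and $m+1$ at time $v$ then gives $\omega_f(v/k)$ uniformly, with no conditional terminal laws at time $uk$. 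One only checks that chains started at $m+1$ stay admissible, which holds because $m(v-2)\ge1$. Completing your route would still require an identity of this kind for the limiting pure-birth laws.

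Two secondary steps also need repair.
\begin{itemize}
\item \emph{Small labels.} The bound $\E|\zeta^f_{v,e}|\le C$ does not follow from the available estimates, which give only $\|\zeta^f_{v,e}\|_4\le C(k/b_v)^\alpha$. Labels below $\varepsilon k$ therefore contribute $C\{\varepsilon^{1-\alpha}+k^{\alpha-1}\}$ in $L^2$, not $O(\varepsilon)$. This vanishes only because $\alpha<1$.
\item \emph{The uniform bound in Step~3.} Lemma~\ref{lem:mean-profile} controls only expected degree counts, so your argument yields $\E|\bar\xi^f_e|\le Ck/t$. Combined with $|\bar\xi^f_e|\le C$, this gives only $\E(\bar\xi^f_e)^2\le Ck/t$, which is not summable against $k^{-1}\sum_t$. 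Even the sharper route through $(\bar\xi^f_e)^2\le\sum_{v<t}\rho_{v,e}(\xi^f_{v,e})^2$ reaches only $(k/t)^{1+\alpha}$. That suffices for the tail of $K_n$ but does not prove the stated bound. The bound $(k/t)^2$ follows from Minkowski's inequality, $\|\bar\xi^f_e\|_2\le W_e^{-1}\sum_{v<t}\|\zeta^f_{v,e}\|_2$. Split at $v=k$ and use the per-vertex bounds $C(k/b_v)^\alpha$ and $C_\ell(k/v)^{(\ell+1)\alpha}$, with $(\ell+1)\alpha>1$, from \eqref{eq:xi-bound} and \eqref{eq:degree-moments}.
\end{itemize}
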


\begin{proof}
We first identify $\E\zeta^f_{v,e}$. On functions of
one degree, write
$\mathcal B\varphi(d)=(d+\delta)\{\varphi(d+1)-\varphi(d)\}$.
Every one-edge transition is $I+\mathcal B/W_e$, so these operators
commute algebraically. For a vertex born at $v\ge3$, let
$P_{v,n}^{\setminus e}$ be the product of its transitions with just
edge $e$ omitted. Moving $\mathcal B$ through the transitions before
$e$ gives the exact identity
\begin{equation}\label{eq:weighted-identity}
 \E\zeta^f_{v,e}
   =r\{P_{v,n}^{\setminus e}f_n(m+1)
                        -P_{v,n}^{\setminus e}f_n(m)\}.
\end{equation}
This calculation uses the unextended difference only at reachable
states. Before an edge $(t,i)$, a chain started at $m+1$ at time $v$
has degree at most $m+1+m(t-v-1)+i-1$, which does not exceed
$m(t-2)+i-1$ because $m(v-2)\ge1$. Thus the chains started at $m$
or $m+1$, with or without the omitted edge, have valid transition
probabilities at every step. Couple a full chain with the chain skipping $e$
by common uniforms at all other edges. They agree unless the full
chain moves at $e$, whose probability is at most
$C(t/v)^\alpha/t$. For $\varepsilon k\le v<t\le u_1k$ this is
$O(k^{-1})$. Therefore~\eqref{eq:weighted-identity} and the fixed-start
Gamma limits in Lemma~\ref{lem:degree-limit} imply
\begin{equation}\label{eq:weighted-mean}
 \sup_{\varepsilon k\le v<t\le u_1k,\;1\le i\le m}
       |\E\zeta^f_{v,(t,i)}-\omega_f(v/k)|\longrightarrow0.
\end{equation}
Uniformity follows by taking convergent subsequences of $v/k$.

Lemma~\ref{lem:one-degree-difference} and~\eqref{eq:degree-moments} give
\[
 \|\zeta^f_{v,e}\|_4\le C(k/b_v)^\alpha,\qquad
 \|\zeta^f_{v,e}\|_2\le C_\ell(k/b_v)^{(\ell+1)\alpha}
 \quad(\ell\ge1).
\]
Choose $\ell$ with $(\ell+1)\alpha>1$ and $0<\varepsilon<u_0/2$.
For $t\ge u_0k$,
Minkowski's inequality shows
\[
 \left\|W_e^{-1}\sum_{v<\varepsilon k}\zeta^f_{v,e}\right\|_2
 \le\frac C{u_0k}\sum_{v<\varepsilon k}(k/b_v)^\alpha
 \le C_{u_0}\{\varepsilon^{1-\alpha}+k^{\alpha-1}\}.
\]
For distinct $v,w\in[\varepsilon k,t)$ with $t\le u_1k$, define
$\widetilde\zeta^f_{v,e}=(\widetilde D_v(e-)+\delta)
\xi^f_{n,e}(\widetilde D_v(e-))$ and define
$\widetilde\zeta^f_{w,e}$ in the same way. These two variables are
independent. Their fourth moments are uniformly bounded by
Lemma~\ref{lem:moments} and the zero extension of $\xi^f_{n,e}$.
They agree with their PA counterparts outside a coupling failure
of probability at most $C_{\varepsilon,u_1}/k$.
Bounding the difference of product expectations and the differences
of means by H\"older's inequality, as in the derivation
of~\eqref{eq:coupling-covariance}, gives
$|\Cov(\zeta^f_{v,e},\zeta^f_{w,e})|\le C_{\varepsilon,u_1}k^{-1/2}$.
There are at most $u_1k$ summands and $W_e\ge c u_0k$.
The variance of their sum divided by $W_e^2$ is therefore at most
$C_{\varepsilon,u_0,u_1}(k^{-1}+k^{-1/2})$.
By~\eqref{eq:weighted-mean},
its mean tends uniformly to
$(mpu)^{-1}\int_\varepsilon^u\omega_f(s)\dd s$.
Let $\varepsilon\downarrow0$, using~\eqref{eq:omega-bound}.
This proves the first assertion.

For $t\le k$, $|\bar\xi_e^f|\le2\|f\|_\infty$.
For $t>k$, split the sum at $v=k$ and use the two moment bounds:
\[
 \|\bar\xi_e^f\|_2
 \le\frac Ct\left\{\sum_{v\le k}(k/b_v)^\alpha
       +\sum_{v>k}(k/v)^{(\ell+1)\alpha}\right\}
 \le Ck/t.
\]
This proves the second assertion. On $[u_0,u_1]$ the function
$\Omega_f(u)/(mpu)$ is continuous and bounded. The first assertion
and Cauchy--Schwarz show, uniformly over this interval and the edge
index $i$, that the squares of $\bar\xi^f_{(\lfloor uk\rfloor,i)}$
converge in $L^1$ to $\{\Omega_f(u)/(mpu)\}^2$.
Riemann sums then give convergence of $K_n$ over
$u_0k\le t\le u_1k$.
The two omitted time ranges have expected contributions at most
$Cu_0$ and $C/u_1$. Let $u_0\downarrow0$ and $u_1\to\infty$.
There are $m$ edges per step and each limit of $\bar\xi_e^f$ has
denominator $mpu$, which gives the coefficient $1/(mp^2)$.
\end{proof}

\begin{proposition}\label{prop:smooth-clt}
For $x_0>0$ and bounded Lipschitz $f$ vanishing on $(0,x_0)$,
$S_n(f)\convd N(0,V(f,f))$ in $\R$.
\end{proposition}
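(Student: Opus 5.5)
We apply the martingale central limit theorem to the Doob martingale $\E[S_n(f)\mid\F_e]$ of Lemma~\ref{lem:martingale}, indexed by the edges $e$ in chronological order. Its terminal value is $S_n(f)$, and its initial value is zero: before the first random edge every degree is deterministic, and the centering removes the mean. Two conditions have to be verified. The first is the conditional Lindeberg condition. By~\eqref{eq:martingale-increment}, $\max_e|\Delta^f_e|\le4\|f\|_\infty/\sqrt k\to0$ deterministically, so for every $\eta>0$ the Lindeberg sum vanishes identically once $n$ is large. The second is convergence in probability of the predictable quadratic variation, and this carries all the substance.

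By Lemma~\ref{lem:martingale}, the predictable variance equals $I_n(f,f)-K_n(f,f)$, and~\eqref{eq:vertex-variance} gives $I_n(f,f)=k^{-1}\sum_v\Xi_v^f+Q_n(f)$. We identify the limit of each piece from the earlier results:
\begin{itemize}
\item Lemma~\ref{lem:vertex-variance-average} gives $k^{-1}\sum_v\Xi_v^f\convp\int_0^\infty\Var f(X_s)\dd s$.
\item Lemma~\ref{lem:one-degree-difference} gives $Q_n(f)\to0$ in $L^1$.
\item Lemma~\ref{lem:weighted-average} gives $K_n(f,f)\to(mp^2)^{-1}\int_0^\infty\Omega_f(u)^2u^{-2}\dd u$ in $L^1$.
\end{itemize}
All three statements apply because $f$ is bounded, Lipschitz and vanishes on $(0,x_0)$. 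Summing them, the predictable variance converges in probability to $V(f,f)$ as defined in~\eqref{eq:covariance-functional}. The limit $V(f,f)$ is finite because $|f|\le\|f\|_\infty\one\{x\ge x_0\}$ is dominated by an envelope $F_0$ of the form used for~\eqref{eq:omega-bound}. It is also nonnegative, since it is a limit of nonnegative predictable variances.

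With both conditions in place, the martingale CLT for triangular arrays (e.g.\ Hall--Heyde, Corollary~3.1) yields $S_n(f)\convd N(0,V(f,f))$. The nesting condition on the filtrations is not needed there because the limiting variance is deterministic. If $V(f,f)=0$, the same conclusion follows more simply: $\E S_n(f)^2=\E\{I_n(f,f)-K_n(f,f)\}$, and this tends to $0$ by the $L^1$ convergences together with uniform integrability. Uniform integrability of $k^{-1}\sum_v\Xi_v^f$ holds because its expectation converges; we would verify this through~\eqref{eq:vertex-variance} and the mean convergence established in the proof of Lemma~\ref{lem:vertex-variance-average}.

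The main obstacle has already been absorbed into the preceding lemmas, namely the identification of the limit of the vertex-variance average and of $K_n$. What remains delicate is bookkeeping. First, the martingale has to be the Doob martingale over all edges $e=(t,i)$ with $3\le t\le n$, so that its increments are exactly~\eqref{eq:martingale-increment}. Second, the quadratic variation has to converge in probability, not merely in mean. For the $\Xi$ term, this is exactly the in-probability statement of Lemma~\ref{lem:vertex-variance-average}. For the other two terms, $L^1$ convergence implies convergence in probability.
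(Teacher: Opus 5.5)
Your proposal is correct and follows the paper's proof essentially step for step: the bounded increments make the conditional Lindeberg sum vanish, Lemmas~\ref{lem:one-degree-difference}, \ref{lem:vertex-variance-average} and~\ref{lem:weighted-average} identify the limit of $I_n(f,f)-K_n(f,f)$ as $V(f,f)$, the case $V(f,f)=0$ is handled through $\E S_n(f)^2\to V(f,f)$, and the Hall--Heyde martingale CLT finishes. The only difference is how the nesting condition is handled. You invoke the standard remark that nesting is unnecessary when the limiting conditional variance is deterministic, whereas the paper obtains nesting by constructing the graph as one infinite process; both are valid, and in the degenerate case the uniform-integrability detour is unnecessary, since convergence of $\E I_n(f,f)$ and $\E K_n(f,f)$ already suffices.
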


\begin{proof}
By Lemmas~\ref{lem:one-degree-difference}, \ref{lem:vertex-variance-average}
and~\ref{lem:weighted-average}, the predictable variance
$I_n(f,f)-K_n(f,f)$ converges in probability to $V(f,f)$.
This limit is nonnegative, since each predictable variance is
nonnegative. The mean convergence proved in
Lemma~\ref{lem:vertex-variance-average}, together
with~\eqref{eq:vertex-variance} and $\E Q_n(f)\to0$, also gives
$\E I_n(f,f)\to\int_0^\infty\Var f(X_s)\dd s$.
Since $K_n(f,f)$ converges in $L^1$,
$\E S_n(f)^2\to V(f,f)$. If $V(f,f)=0$, the conclusion follows
in $L^2$. Suppose now that $V(f,f)>0$.
For every $\eta>0$, equation~\eqref{eq:martingale-increment}
gives, for all sufficiently large $n$,
\[
 \sum_e\E\left[(\Delta_e^f)^2\one\{|\Delta_e^f|>\eta\}
                       \mid\F_{e-}\right]=0.
\]
Thus the conditional Lindeberg condition holds. The graph can be
constructed as one infinite process, so the filtration at a fixed
edge is identical in all rows of the martingale array containing
that edge. The martingale central limit theorem
\citep[\S3.2]{hall1980martingale} applies to this triangular array.
The initial sigma-field is trivial and the terminal value is $S_n(f)$,
so the claimed convergence follows.
\end{proof}

\subsection{Discontinuous functions and the Hill statistic}
The bound~\eqref{eq:variance-domination} allows bounded
Lipschitz test functions to approximate functions with jumps. If $f$
is bounded, has compact support in $(0,\infty)$ and is Lipschitz
between finitely many jump points, let $f_\eta$ replace $f$ by linear
interpolation on intervals of length $2\eta$ around those points.
For sufficiently small $\eta$, all these functions vanish on a
common interval $(0,x_0)$ with $x_0>0$ and have a common bounded
support. The difference is bounded by $2\|f\|_\infty$ and supported
on the interpolation intervals. Lemmas~\ref{lem:mean-tail}
and~\ref{lem:martingale}, applied to slightly larger intervals to
include any lattice atoms at their endpoints, give
\[
 \limsup_n\E\{S_n(f)-S_n(f_\eta)\}^2\le C_f\eta.
\]
Proposition~\ref{prop:smooth-clt} applies to $f_\eta$, and
\eqref{eq:omega-bound} gives $V(f_\eta,f_\eta)\to V(f,f)$.
The approximation theorem for weak convergence in
\citet{billingsley1999convergence} yields
$S_n(f)\convd N(0,V(f,f))$ in $\R$.

For $A>\alpha$, put $h_A=h\one_{(0,e^A]}$.
This convergence applies to $h_A$.
For each $n$, all degrees are at most $m(n-1)$, so
$h-h_A$ may be replaced by a bounded function agreeing with it on
$(0,m(n-1)/a_n]$. The constant in~\eqref{eq:variance-domination}
is independent of that function, so the inequality applies for each
$n$. Together with the tail bound of Lemma~\ref{lem:mean-tail}, it gives
\begin{equation}\label{eq:hill-truncation}
\begin{split}
 \E\{S_n(h)-S_n(h_A)\}^2
 &\le C\,\E\widehat\nu_n
           [(\log x-\alpha)^2\one\{x>e^A\}]\\
 &\le C(1+A^2)e^{-pA}
           +Ck^{\alpha-1}(1+A^2)e^{-A}.
\end{split}
\end{equation}
The second inequality follows by expressing the squared logarithm as
its value at $e^A$ plus the integral of
$2(\log x-\alpha)/x$ from $e^A$ onwards. Since $\alpha<1$, its
right side tends to zero as $n\to\infty$ and then $A\to\infty$.
The bound~\eqref{eq:omega-bound} and dominated convergence also give
$V(h_A,h_A)\to V(h,h)$. The approximation theorem in
\citet{billingsley1999convergence}, applied again, proves
\begin{equation}\label{eq:hill-centered-clt}
 S_n(h)\convd N(0,V(h,h))\quad\hbox{in }\R.
\end{equation}
No condition beyond $k\to\infty$ and $k/n\to0$ has been used for
this centered empirical limit.

The random threshold requires the stronger condition
$\sqrt{k}/a_n\to0$, equivalent to~\eqref{eq:k-rate}.

\begin{lemma}\label{lem:local-threshold}
Under~\eqref{eq:k-rate}, for each fixed $0<C_0<\infty$,
\begin{equation}\label{eq:local-linearization}
 \sup_{|u|\le C_0}\left|
 \sqrt{k}\{\widehat\nu_n(1+u/\sqrt{k},\infty)
                   -\widehat\nu_n(1,\infty)\}+pu\right|\convp0.
\end{equation}
Moreover,
\begin{equation}\label{eq:threshold-linearization}
 \sqrt{k}(\widehat x_n-1)
       -\alpha\sqrt{k}\{\widehat\nu_n(1,\infty)-1\}\convp0,
\end{equation}
and the last line of~\eqref{eq:hill-decomposition} tends to zero in
probability in $\R$.
\end{lemma}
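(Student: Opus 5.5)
We prove the three assertions in order, treating \eqref{eq:local-linearization} as the main step. The other two follow from it by inversion and by a one-term expansion of the integral.

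For \eqref{eq:local-linearization}, write $T_n(x)=\widehat\nu_n(x,\infty)$ and split
\[
 \sqrt k\{T_n(1+u/\sqrt k)-T_n(1)\}
 =\{S_n(\one_{(1+u/\sqrt k,\infty)})-S_n(\one_{(1,\infty)})\}
 +\sqrt k\{\E T_n(1+u/\sqrt k)-\E T_n(1)\}.
\]
The deterministic part we handle with Lemma~\ref{lem:mean-tail}. The first bound there gives
\[
 \sqrt k\{\E T_n(1+u/\sqrt k)-\E T_n(1)\}
 =\sqrt k\{(1+u/\sqrt k)^{-p}-1\}+O(\sqrt k/a_n+k^{-1/2}),
\]
uniformly in $|u|\le C_0$. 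The main term tends to $-pu$ uniformly, and the error vanishes exactly because $\sqrt k/a_n\to0$, which is \eqref{eq:k-rate}. This is where the lattice enters: the interval $(1,1+u/\sqrt k]$ has length $\asymp a_n/\sqrt k$ in degree units. It must contain many integers for the stationary tail to approximate $x^{-p}$ linearly, and that holds only when $\sqrt k/a_n\to0$. Note also that $e_j$ enters only through $\sum_j|e_j|\le C$, contributing $O(\sqrt k/k)$.

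For the stochastic part, \eqref{eq:variance-domination} applied to $f=\one_{(1,1+u/\sqrt k]}$ gives
\[
 \E\{S_n(f)\}^2\le c_*^{-1}\E\widehat\nu_n(1,1+u/\sqrt k]
 =O(k^{-1/2}+a_n^{-1}+k^{-1})\to0.
\]
This handles each fixed $u$. For uniformity we use a grid $u_j$ of mesh $\eta$ in $[-C_0,C_0]$ and the monotonicity of $u\mapsto T_n(1+u/\sqrt k)$. Between grid points, the stochastic increment is sandwiched by the grid values plus the deterministic increment, which is at most $p\eta+o(1)$. A finite union bound then gives a $\limsup$ of order $\eta$, and we let $\eta\downarrow0$. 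The main obstacle is this step: making the mean control uniform despite the discrete atoms of the degree law. Lemma~\ref{lem:mean-tail} is stated uniformly in $x\ge x_0$, so it should suffice.

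For \eqref{eq:threshold-linearization}, first note that $\sqrt k\{T_n(1)-1\}=S_n(\one_{(1,\infty)})+o(1)$ is tight, by the extension of Proposition~\ref{prop:smooth-clt} to indicators (or simply by \eqref{eq:variance-domination}). The threshold is characterized by $T_n(\widehat x_n)\le k^{-1}\cdot k=1\le T_n(\widehat x_n-)$, since exactly $k$ vertices have degree strictly above $D_{(k+1)}$ only up to ties. The key observation is that ties change $kT_n$ by the size of a single degree class. By \eqref{eq:local-linearization}, the mass at any one atom near $1$ is $o(k^{-1/2})$. Hence $\sqrt k\{T_n(\widehat x_n)-1\}\convp0$. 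We obtain tightness of $\sqrt k(\widehat x_n-1)$ by inverting: for large $C_0$, the event $|\sqrt k(\widehat x_n-1)|>C_0$ forces $|\sqrt k\{T_n(1)-1\}|\gtrsim pC_0$, whose probability is small. Plugging $u=\sqrt k(\widehat x_n-1)$ into \eqref{eq:local-linearization} then gives $\sqrt k\{1-T_n(1)\}+p\sqrt k(\widehat x_n-1)\convp0$, which is \eqref{eq:threshold-linearization} after dividing by $p$.

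For the last line of \eqref{eq:hill-decomposition}, we write
\[
 \int_1^{\widehat x_n}T_n(x)\frac{\dd x}{x}=T_n(1)\log\widehat x_n+\int_1^{\widehat x_n}\{T_n(x)-T_n(1)\}\frac{\dd x}{x}.
\]
By \eqref{eq:local-linearization} and tightness, the remainder integral is $O_p(k^{-1})$ after the $\sqrt k$ scaling, hence negligible. Since $T_n(1)\convp1$ and $\sqrt k\log\widehat x_n=\sqrt k(\widehat x_n-1)+o_p(1)$, \eqref{eq:threshold-linearization} shows that $\sqrt k\int_1^{\widehat x_n}T_n\,\dd x/x$ equals $\alpha\sqrt k\{T_n(1)-1\}+o_p(1)$. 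This cancels the first term of the bracket and completes the proof.
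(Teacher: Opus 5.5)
Your proposal is correct and follows the paper's proof essentially step for step. It uses the same split into a centered increment, controlled by \eqref{eq:variance-domination} and Lemma~\ref{lem:mean-tail} on a grid with monotonicity, and a mean increment equal to $-pu+O(\sqrt{k}/a_n+k^{-1/2})$. It then inverts at the threshold and expands the integral to first order.

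The only difference is in the inversion. You first prove tightness of $\sqrt{k}(\widehat x_n-1)$ and control ties through the atom bound implied by \eqref{eq:local-linearization}. The paper instead sandwiches $\sqrt{k}(\widehat x_n-1)$ between $[\sqrt{k}\{\widehat\nu_n(1,\infty)-1\}\pm2\eta]/p$ using $\{\widehat x_n>x\}=\{\widehat\nu_n(x,\infty)>1\}$, which absorbs ties automatically.

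One small correction: your remainder integral is $O_p(k^{-1})$ before the $\sqrt{k}$ scaling and $O_p(k^{-1/2})$ after it. It is still negligible, so the conclusion stands.
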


\begin{proof}
Write $f_x=\one_{(x,\infty)}$.
For $1/2\le x<y\le2$, \eqref{eq:variance-domination} and
Lemma~\ref{lem:mean-tail} give
\begin{equation}\label{eq:local-increments}
 \E\{S_n(f_x)-S_n(f_y)\}^2
       \le C\{y-x+a_n^{-1}+k^{-1}\}.
\end{equation}
Also $\sup_n\E S_n(f_1)^2<\infty$.
For a fixed integer $N$, choose a uniform grid
$u_j=-C_0+2jC_0/N$, $0\le j\le N$, and put $x_j=1+u_j/\sqrt{k}$.
Equation~\eqref{eq:local-increments} makes
$\max_j|S_n(f_{x_j})-S_n(f_1)|\convp0$.
For $x_j\le x\le x_{j+1}$, monotonicity of the empirical tail and its
mean bounds $|S_n(f_x)-S_n(f_1)|$ by this maximum plus
\[
 \sqrt{k}\{\E\widehat\nu_n(x_j,\infty)
                  -\E\widehat\nu_n(x_{j+1},\infty)\}
 \le \frac{2pC_0}{N}+o(1)
                  +C(\sqrt{k}/a_n+k^{-1/2}).
\]
Let $n\to\infty$ and then $N\to\infty$.
The centered increments vanish uniformly in $|u|\le C_0$.
The corresponding mean increment is $-pu+o(1)$ uniformly, because
Lemma~\ref{lem:mean-tail} gives a uniform error
$O(\sqrt{k}/a_n+k^{-1/2})$, and a Taylor expansion of
$(1+u/\sqrt{k})^{-p}$ gives the stated linear term.
This proves~\eqref{eq:local-linearization}.

Put $U_n=\sqrt{k}\{\widehat\nu_n(1,\infty)-1\}$.
Including ties, the order statistic satisfies
\begin{equation}\label{eq:threshold-event}
 \{\widehat x_n>x\}=\{\widehat\nu_n(x,\infty)>1\}.
\end{equation}
Fix $\epsilon>0$. Lemma~\ref{lem:mean-tail}, $\sup_n\E S_n(f_1)^2<\infty$
and Chebyshev's inequality give $C_1$ with
$\limsup_n\PP(|U_n|>C_1)\le\epsilon$.
On $|U_n|\le C_1$, for fixed $\eta>0$, set
$u_\pm=(U_n\pm2\eta)/p$. Equation~\eqref{eq:local-linearization}
makes the empirical tail at $1+u_+/\sqrt{k}$ smaller than one and
at $1+u_-/\sqrt{k}$ larger than one, with probability tending to at
least $1-\epsilon$. Equation~\eqref{eq:threshold-event} then gives
\[
 (U_n-2\eta)/p
    <\sqrt{k}(\widehat x_n-1)\le(U_n+2\eta)/p.
\]
Letting $\epsilon,\eta\downarrow0$
proves~\eqref{eq:threshold-linearization}.

To control the integral in~\eqref{eq:hill-decomposition}, fix
$\epsilon>0$. The preceding bounds give a constant $C_2$ such that,
with limiting probability at least $1-\epsilon$,
\[
 |U_n|\le C_2,\qquad
 \sqrt{k}|\widehat x_n-1|\le C_2,\qquad
 \sup_{|u|\le C_2}\left|
 \sqrt{k}\{\widehat\nu_n(1+u/\sqrt{k},\infty)
                    -\widehat\nu_n(1,\infty)\}+pu\right|\le1.
\]
On this event, $x\ge1/2$ between $1$ and $\widehat x_n$ for large
$n$. The empirical tail changes there by at most
$(pC_2+1)/\sqrt{k}$. Integrating this bound, and using
$|\log x-(x-1)|\le2(x-1)^2$ for $|x-1|\le1/2$, gives
\begin{equation}\label{eq:threshold-integral}
 \left|\sqrt{k}\int_1^{\widehat x_n}
            \widehat\nu_n(x,\infty)\frac{\dd x}{x}
                   -\sqrt{k}(\widehat x_n-1)\right|
 \le\frac{C(C_2)}{\sqrt{k}}.
\end{equation}
Let $n\to\infty$ and then $\epsilon\downarrow0$.
Equations~\eqref{eq:threshold-linearization}
and~\eqref{eq:threshold-integral} prove the final assertion.
\end{proof}

\subsection{Evaluation of the limiting variance}

Since $\int_0^\infty\E h(X_s)^2\dd s=\nu h^2$
by~\eqref{eq:gamma-integral}, the covariance
functional~\eqref{eq:covariance-functional} gives $V(h,h)$ as $\nu h^2$
minus two nonnegative terms. The first,
$\int_0^\infty\{\E h(X_s)\}^2\dd s$, arises because vertices born at
different times have different degree laws. The second, the integral
of $\Omega_h(u)^2u^{-2}$, arises from the competition among old
vertices for each new edge. The next lemma evaluates both terms.

\begin{lemma}\label{lem:variance-value}
Let $G,G'$ be independent Gamma variables of shape $r=m+\delta$.
With $\Lambda_{m,\delta}$ defined by~\eqref{eq:variance-function},
$\nu h^2=\alpha^2$ and
\begin{align}
 \int_0^\infty\{\E h(X_s)\}^2\dd s
 &=\Lambda_{m,\delta}
 =\alpha^2\frac{\E\{(G\wedge G')^p\}}{\E(G^p)},
 \label{eq:heterogeneity-value}\\
 \frac{1}{mp^2}\int_0^\infty\frac{\Omega_h(u)^2}{u^2}\dd u
 &=\frac{\Lambda_{m,\delta}}{m}.
 \label{eq:competition-value}
\end{align}
Consequently,
\[
 V(h,h)=\alpha^2-(1+m^{-1})\Lambda_{m,\delta}=V_m(\delta),
 \qquad 0<V_m(\delta)<\alpha^2,
\]
where $V_m(\delta)$ is given by \eqref{eq:variance-function} and is
continuous for $\delta>-m$.
The representation~\eqref{eq:variance-beta} also holds.
\end{lemma}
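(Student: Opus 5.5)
The plan is to compute each piece of the covariance functional~\eqref{eq:covariance-functional} at $f=h$ in closed form. The tool is conditioning on the Gamma variable: for fixed $G_a=g$, put $c=(g/\beta)^p$. Then $X_s$ exceeds $1$ exactly when $s<c$, and since $\alpha p=1$ we have $\log(\beta^{-1}s^{-\alpha}g)=\alpha\log(c/s)$. The first piece is routine. Under $\nu$, $\log x$ is exponential with rate $p$, so $\nu h^2=\alpha^2$. Together with~\eqref{eq:gamma-integral} this gives
\[
 \int_0^\infty\Cov(h(X_s),h(X_s))\dd s=\alpha^2-\int_0^\infty\{\E h(X_s)\}^2\dd s ,
\]
so it remains to evaluate the two subtracted terms.

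\emph{Heterogeneity term.} For fixed $g$, $h(X_s)=\alpha\{\log(c/s)-1\}\one\{s<c\}$. I write $\{\E h(X_s)\}^2$ as an expectation over independent copies $G,G'$, with $c,c'$ the corresponding values, and apply Fubini. This reduces the problem to
\[
 \alpha^2\int_0^{a}\{\log(c/s)-1\}\{\log(c'/s)-1\}\dd s,\qquad a=c\wedge c'.
\]
Substitute $s=aw$. Then $\log(c/s)=\log(c/a)-\log w$, and one of $\log(c/a)$, $\log(c'/a)$ vanishes. The identities $\int_0^1(-\log w-1)\dd w=0$ and $\int_0^1(\log w+1)^2\dd w=1$ show the integral equals $\alpha^2a$. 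Since $\E a=\E\{(G\wedge G')^p\}/\beta^p$ and $\beta^p=\E G^p$, this gives the ratio form of~\eqref{eq:heterogeneity-value}. The $\Gamma$-integral form then follows from $\PP(G\wedge G'>y)=\{\Gamma(r,y)/\Gamma(r)\}^2$ and $\E\{(G\wedge G')^p\}=\int_0^\infty py^{p-1}\PP(G\wedge G'>y)\dd y$, which match the normalization in~\eqref{eq:variance-function}.

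\emph{Competition term.} This is the step I expect to be the main obstacle, because it requires finding a closed form for $\Omega_h$. The same computation gives
\[
 \int_0^u\E h(\beta^{-1}s^{-\alpha}G_a)\dd s=\alpha\,\E\!\int_0^{c\wedge u}\{\log(c/s)-1\}\dd s=u\,\E\log_+\{G_a/(\beta u^\alpha)\},
\]
because $\int_0^b\{\log(c/s)-1\}\dd s=b\log(c/b)$. Next, the size-bias identity $r\,\E\psi(G_{r+1})=\E\{G\psi(G)\}$ gives
\[
 \Omega_h(u)/u=\E\{(G-r)\log_+(G/y)\},\qquad y=\beta u^\alpha .
\]
Differentiating in $y$ and using $\E\{(G-r)\one\{G>y\}\}=y^re^{-y}/\Gamma(r)$ (integration by parts), the limit at $y=\infty$ being $0$, I expect $\Omega_h(u)=u\,\Gamma(r,\beta u^\alpha)/\Gamma(r)$. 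With this, the substitution $x=\beta u^\alpha$, i.e.\ $\dd u=p\beta^{-p}x^{p-1}\dd x$, turns $(mp^2)^{-1}\int_0^\infty\Omega_h^2u^{-2}\dd u$ into $\Lambda_{m,\delta}/m$, which is~\eqref{eq:competition-value}. Since $h$ is not bounded, I will justify Fubini and the differentiation under the integral by the bounds~\eqref{eq:omega-bound} and~\eqref{eq:gamma-integral}.

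\emph{Consequences.} The three evaluations give $V(h,h)=\alpha^2-(1+m^{-1})\Lambda_{m,\delta}$. For the upper bound, $\Lambda>0$ is clear. For positivity, $(1+1/m)\Lambda\le2\Lambda$, and $2(G\wedge G')^p<G^p+G'^p$ almost surely, so $2\E\{(G\wedge G')^p\}<\E G^p+\E G'^p=2\E G^p$. Hence $2\Lambda<\alpha^2$ and $V_m(\delta)>0$. Continuity in $\delta$ follows by dominated convergence in the integral defining $\Lambda_{m,\delta}$, using $\Gamma(r,x)\le2^r\Gamma(r)e^{-x/2}$ locally uniformly in $r$. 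For~\eqref{eq:variance-beta}, write $G\wedge G'=S\min(B,1-B)$, where $S\sim$ Gamma$(2r)$ is independent of $B\sim$ Beta$(r,r)$. Then
\[
 \E\{(G\wedge G')^p\}=\frac{\Gamma(2r+p)}{\Gamma(2r)}\cdot\frac{2}{B(r,r)}\int_0^{1/2}b^{r+p-1}(1-b)^{r-1}\dd b .
\]
Dividing by $\E G^p=\Gamma(r+p)/\Gamma(r)$, the Gamma factors cancel, leaving $\E\{(G\wedge G')^p\}/\E G^p=2I_{1/2}(r+p,r)$. So $\Lambda_{m,\delta}=2\alpha^2I_{1/2}(r+p,r)$, which gives~\eqref{eq:variance-beta}.
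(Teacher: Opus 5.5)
Your evaluations of $\nu h^2$, the heterogeneity term, the competition term and the beta representation are correct. They are essentially the paper's. The only difference is that you integrate over $s$ before applying the size-bias identity, which turns $h$ into the continuous $\log_+$, so the boundary term $-\alpha a^re^{-a}$ that the paper must keep in $\omega_h$ never appears. Both routes give $\Omega_h(u)=u\,\Gamma(r,\beta u^\alpha)/\Gamma(r)$. The gap is the proof of $V_m(\delta)>0$.

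From $2(G\wedge G')^p<G^p+G'^p$ you obtain only $\E\{(G\wedge G')^p\}<\E G^p$, that is, $\Lambda_{m,\delta}<\alpha^2$. You do not obtain $2\Lambda_{m,\delta}<\alpha^2$, and the weaker bound gives only $V_m(\delta)>-\alpha^2/m$.

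Worse, the intermediate claim $2\Lambda_{m,\delta}<\alpha^2$ is false in general, so routing the argument through $(1+1/m)\le 2$ cannot be repaired. Take $m=4$ and $\delta=0$, so $r=4$ and $p=2$. Then $I_{1/2}(6,4)=130/512$, hence $\Lambda_{4,0}/\alpha^2=2I_{1/2}(6,4)=65/128>1/2$, while $V_4(0)=187/2048>0$. As $r=m(p-1)$ grows, the ratio $\E\{(G\wedge G')^p\}/\E G^p$ tends to $1$. You therefore need a bound on this ratio that improves with $m$.

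The paper uses the inequality $(x\wedge y)^p\le x^{p-1}y$ for $x,y>0$ and $p>1$, with equality only when $x=y$. Independence and continuity of the Gamma laws then give
\[
 \frac{\E\{(G\wedge G')^p\}}{\E G^p}<\frac{\E G^{p-1}\,\E G'}{\E G^p}
 =\frac{r\,\Gamma(r+p-1)}{\Gamma(r+p)}=\frac{r}{r+p-1}=\frac{m}{m+1}.
\]
The last equality uses $r=m(p-1)$. This is exactly $(1+m^{-1})\Lambda_{m,\delta}<\alpha^2$, which is the positivity of $V_m(\delta)$.
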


\begin{proof}
For $x\le1$, $\Gamma(r,x)\le\Gamma(r)$, and for $x\ge1$,
$\Gamma(r,x)\le2^r\Gamma(r)e^{-x/2}$. If $\delta$ ranges over a
fixed compact subset of $(-m,\infty)$, choose $1<p_-\le p\le p_+<\infty$.
The integrand in~\eqref{eq:variance-function}, including its
normalizing factor, is bounded by $Cx^{p_--1}$ on $(0,1]$ and by
$Cx^{p_+-1}e^{-x}$ on $[1,\infty)$. Both bounds are integrable.
Dominated convergence proves finiteness and continuity of
$\Lambda_{m,\delta}$, and hence continuity of $V_m$.
The change of variables $x=e^y$ gives
\[
 \nu h^2=\int_1^\infty(\log x-\alpha)^2px^{-p-1}\dd x
        =\int_0^\infty(y-\alpha)^2pe^{-py}\dd y=\alpha^2.
\]

To calculate the first integral, put $a=\beta s^\alpha$.  Since
$X_s>1$ exactly when $G>a$, independence gives
\[
 \{\E h(X_s)\}^2
 =\E\left[\{\log(G/a)-\alpha\}\{\log(G'/a)-\alpha\}
                      \one\{G\wedge G'>a\}\right].
\]
Hence the substitution $s=(a/\beta)^p$ and Fubini's theorem yield
\begin{equation}\label{eq:mean-square-integral}
 \int_0^\infty\{\E h(X_s)\}^2\dd s
 =p\beta^{-p}\E\int_0^{G\wedge G'}
       a^{p-1}\{\log(G/a)-\alpha\}\{\log(G'/a)-\alpha\}\dd a .
\end{equation}
Fubini's theorem applies because Cauchy--Schwarz and
\eqref{eq:gamma-integral} give
\[
 p\beta^{-p}\int_0^\infty a^{p-1}
       \E|h(G/a)h(G'/a)|\dd a
 \le p\beta^{-p}\int_0^\infty a^{p-1}\E h(G/a)^2\dd a
 =\nu h^2=\alpha^2.
\]
For $0<x\le y$, the substitution $a=xe^{-z}$ gives
\begin{align*}
 &\int_0^x a^{p-1}\{\log(x/a)-\alpha\}
                       \{\log(y/a)-\alpha\}\dd a\\
 &\qquad=x^p\int_0^\infty e^{-pz}(z-\alpha)
                             \{z-\alpha+\log(y/x)\}\dd z
       =\frac{x^p}{p^3}.
\end{align*}
Indeed, $\int_0^\infty e^{-pz}(z-\alpha)\dd z=0$ and
$\int_0^\infty e^{-pz}(z-\alpha)^2\dd z=p^{-3}$.
Substitution in \eqref{eq:mean-square-integral}, followed by
$\beta^p=\E G^p$, proves the ratio in
\eqref{eq:heterogeneity-value}.  Finally,
\[
 \E(G\wedge G')^p
 =\int_0^\infty pa^{p-1}\PP(G>a)^2\dd a
 =\frac{p}{\Gamma(r)^2}
       \int_0^\infty a^{p-1}\Gamma(r,a)^2\dd a,
\]
and $\beta^p=\Gamma(r+p)/\Gamma(r)$ identifies this ratio with
$\Lambda_{m,\delta}$.

For the competition term, use the density relation
$r\,\PP(G_{r+1}\in\dd y)=y\,\PP(G_r\in\dd y)$ in
\eqref{eq:omega}. With $a=\beta s^\alpha$, it gives
\[
 \omega_h(s)=\frac1{\Gamma(r)}\int_a^\infty
       (y-r)\{\log(y/a)-\alpha\}y^{r-1}e^{-y}\dd y.
\]
Since $(y^re^{-y})'=(r-y)y^{r-1}e^{-y}$, integration by parts yields
\begin{equation}\label{eq:omega-hill}
 \omega_h(s)=\frac{\Gamma(r,a)-\alpha a^re^{-a}}{\Gamma(r)}.
\end{equation}
The second term is the boundary contribution of the jump of $h$ at
$1$. It must be retained even though the Gamma laws have no atoms.
Substituting $s=(a/\beta)^p$ and using $p\alpha=1$ now gives
\begin{equation}\label{eq:integrated-omega-hill}
 \begin{split}
 \Omega_h(u)
 &=\frac{\beta^{-p}}{\Gamma(r)}
      \int_0^{\beta u^\alpha}
       \{pa^{p-1}\Gamma(r,a)-a^{r+p-1}e^{-a}\}\dd a\\
 &=\frac{(\beta u^\alpha)^p\Gamma(r,\beta u^\alpha)}{\Gamma(r+p)}
   =u\,\PP(X_u>1).
 \end{split}
\end{equation}
The integrand is the derivative of $a^p\Gamma(r,a)$, whose value
tends to zero as $a\downarrow0$. Write $a=\beta u^\alpha$ again.
Since $u^{-2}\dd u=p\beta^p a^{-p-1}\dd a$,
\[
 \frac1{mp^2}\int_0^\infty\frac{\Omega_h(u)^2}{u^2}\dd u
 =\frac{\beta^p}{mp\,\Gamma(r+p)^2}
        \int_0^\infty a^{p-1}\Gamma(r,a)^2\dd a
 =\frac{\Lambda_{m,\delta}}m.
\]
This proves \eqref{eq:competition-value}.  Subtracting the squared
mean from $\int\E h(X_s)^2\dd s=\nu h^2$ and then subtracting the
competition term gives the asserted value of $V(h,h)$.

The strict upper bound follows from $\Lambda_{m,\delta}>0$.  To prove
positivity, observe that, for $x,y>0$ and $p>1$,
$(x\wedge y)^p\le x^{p-1}y$, with equality only when $x=y$.
The independent Gamma variables have continuous laws, and therefore
\[
 \frac{\E(G\wedge G')^p}{\E G^p}
 <\frac{\E G^{p-1}\,\E G'}{\E G^p}
 =\frac{r\Gamma(r+p-1)}{\Gamma(r+p)}
 =\frac{r}{r+p-1}=\frac{m}{m+1}.
\]
The last identity uses $r=m(p-1)$.  This proves $V_m(\delta)>0$
throughout the admissible range $\delta>-m$.

For the beta representation, $T=G+G'$ and $U=G/(G+G')$ are
independent, with laws $\mathrm{Gamma}(2r,1)$ and
$\mathrm{Beta}(r,r)$.  Write
$B_x(a,b)=\int_0^x t^{a-1}(1-t)^{b-1}\dd t$ and
$B(a,b)=\Gamma(a)\Gamma(b)/\Gamma(a+b)$.  Symmetry about $1/2$ gives
\begin{align*}
 \frac{\E(G\wedge G')^p}{\E G^p}
 &=\frac{\Gamma(2r+p)\Gamma(r)}{\Gamma(2r)\Gamma(r+p)}
                   \frac{2B_{1/2}(r+p,r)}{B(r,r)}\\
 &=2\frac{B_{1/2}(r+p,r)}{B(r+p,r)}
 =2I_{1/2}(r+p,r).
\end{align*}
Substitution proves \eqref{eq:variance-beta}.
\end{proof}

\begin{proof}[Proof of Theorem~\ref{thm:clt}]
Equation~\eqref{eq:hill-centered-clt} gives the centered empirical
normal limit. Under~\eqref{eq:k-rate}, Lemma~\ref{lem:mean-tail}
shows $\sqrt{k}\E\widehat\nu_nh\to0$, and
Lemma~\ref{lem:local-threshold} makes the threshold remainder
in~\eqref{eq:hill-decomposition} vanish in probability.
Lemma~\ref{lem:variance-value} identifies the variance as
$V_m(\delta)$. These statements together prove the theorem.
\end{proof}

\begin{proof}[Proof of Corollary~\ref{cor:variance}]
Lemma~\ref{lem:variance-value} gives
$V_m(\delta)=\alpha^2-(1+m^{-1})\Lambda_{m,\delta}$ and
$0<V_m(\delta)<\alpha^2$.
\end{proof}

\section{Discussion}\label{sec:discussion}
The condition $\delta>-m$ is exactly the positivity condition for the
attachment weights. The restriction $\alpha<1$ makes the sum
in~\eqref{eq:summable-growth} finite, and in the proof of
Lemma~\ref{lem:weighted-average} it bounds the contribution of labels
below $\varepsilon k$ by $C\varepsilon^{1-\alpha}$. A pathwise
coupling over an unbounded time interval would need the sum of
$g(s,t)^2t^{-2}$ over $t>s$ to be small, and this sum diverges when
$\alpha\ge1/2$. Coupling paths only over bounded ratios of times, and
controlling $I_n(f,f)$ through the moments of $\Xi^f_v$, avoids a
stronger restriction on $\delta$.

Condition~\eqref{eq:k-rate} plays a different role. After
normalization the degree lattice has mesh $1/a_n$, whereas the
threshold $\widehat x_n$ fluctuates on the scale $k^{-1/2}$.
Condition~\eqref{eq:k-rate} makes the former negligible relative to
the latter, and it is necessary for the uniform local
approximation~\eqref{eq:local-linearization}.
Indeed, as a function of $u$, the empirical term
in~\eqref{eq:local-linearization} is constant between successive
points separated by $\sqrt{k}/a_n$. If this spacing is
bounded below on a subsequence, a fixed interval of positive length
contains a subinterval of length bounded below on which that term is
constant. Its uniform distance from the line $-pu$ is then bounded
away from zero. This argument concerns~\eqref{eq:local-linearization}
only. It does not show that~\eqref{eq:k-rate} is necessary for every
centered limit of the Hill estimator.

The results keep $m$ and $\delta$ fixed. The estimates are not
uniform as $\delta\downarrow-m$ and do not cover a growing number
of edges per vertex. They apply to the sequential attachment rule
in~\eqref{eq:attachment}. Extending the theorem to another attachment
convention requires checking its expected degree recursion and its
conditional covariance at each edge. The theorem also assumes a
deterministic sequence $k_n$; a choice made from the observed degree
sequence requires further analysis.

\section*{Funding}
T. Wang was funded by the National Natural Science Foundation
of China under Grant 12301660 and the Science and Technology Commission of Shanghai
Municipality under Grant 23JC1400700.

\appendix
\section{Degree growth and comparison processes}\label{sec:tools}
We compare degrees with independent linear pure-birth processes.
Their rates remain positive for $\delta>-m$. Comparing normalized
terminal values permits the full range $\alpha\in(0,1)$, while path
couplings are used only over bounded ratios of times. The products
$R_q$, $\Theta$ and $g$ are defined in~\eqref{eq:products}.

\subsection{Pure-birth processes and moments}

For an integer $d\ge m$, let
$B^{(\delta,d)}(\theta)$ start at $d$ and jump from $j$ to $j+1$ at
rate $j+\delta$. These rates are positive, and the process is
nonexplosive because $\sum_{j\ge d}(j+\delta)^{-1}=\infty$.
These linear pure-birth processes are standard branching-process
constructions \citep{athreya1972branching,athreya2008growth}. We record
their formulas for the possibly noninteger initial weight $d+\delta$.

\begin{lemma}\label{lem:birth}
For $j\ge0$, $q\ge0$ and $\theta\ge0$,
\begin{align}
 \PP\{B^{(\delta,d)}(\theta)=d+j\}
 &=\frac{\Gamma(d+\delta+j)}{\Gamma(d+\delta)j!}
 e^{-(d+\delta)\theta}(1-e^{-\theta})^j,
 \label{eq:birth-law}\\
 \E\rise{\{B^{(\delta,d)}(\theta)+\delta\}}{q}
 &=\rise{(d+\delta)}{q}e^{q\theta}.
 \label{eq:birth-moments}
\end{align}
The martingale $e^{-\theta}\{B^{(\delta,d)}(\theta)+\delta\}$ converges
in $L^2$ to a variable with law $G_{d+\delta}$, and
\begin{equation}\label{eq:birth-relative}
 \E\left[\left\{
 \frac{e^{-\theta}\{B^{(\delta,d)}(\theta)+\delta\}}{d+\delta}-1
 \right\}^{\!2}\right]
 =\frac{1-e^{-\theta}}{d+\delta}.
\end{equation}
\end{lemma}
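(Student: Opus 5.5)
We treat the three assertions of Lemma~\ref{lem:birth} in order, using the Yule-type structure of $B=B^{(\delta,d)}$ with the shifted state $Z(\theta)=B(\theta)+\delta$. This shifted state starts at $c=d+\delta>0$, increases by one at each jump, and jumps at rate $Z$.

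For the law~\eqref{eq:birth-law}, write the Kolmogorov forward equations for $P_j(\theta)=\PP\{B(\theta)=d+j\}$:
\[
 P_j'=-(c+j)P_j+(c+j-1)P_{j-1},\qquad P_{-1}=0 .
\]
We then check by induction on $j$ that the negative-binomial expression solves them with $P_j(0)=\one\{j=0\}$. For $j=0$ this is $e^{-c\theta}$. For the induction step, differentiate $e^{-c\theta}(1-e^{-\theta})^j$. The identity $(c+j-1)\Gamma(c+j-1)/(j-1)!=j\,\Gamma(c+j)/j!$ matches the inflow term. Nonexplosion, already noted, gives uniqueness of the minimal solution, so this is the law. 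Equivalently, it is the generating-function solution $\E s^{B-d}=\{e^{-\theta}/(1-s(1-e^{-\theta}))\}^c$.

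For the rising-factorial moments~\eqref{eq:birth-moments}, put $\phi(z)=\rise{z}{q}$. The key identity, already used in the proof of Lemma~\ref{lem:mean-profile}, is
\[
 z\{\rise{(z+1)}{q}-\rise{z}{q}\}=q\,\rise{z}{q}.
\]
Hence the generator applied to $\phi$ gives $q\phi$. Consequently $e^{-q\theta}\phi(Z(\theta))$ is a local martingale, and $\E\phi(Z(\theta))$ satisfies $u'=qu$, which yields $\rise{c}{q}e^{q\theta}$. To justify differentiating the expectation, I would instead compute the series directly from~\eqref{eq:birth-law}. It equals
\[
 \frac{e^{-c\theta}}{\Gamma(c)}\sum_j\frac{\Gamma(c+j+q)}{j!}(1-e^{-\theta})^j
 =\frac{\Gamma(c+q)}{\Gamma(c)}e^{-c\theta}e^{(c+q)\theta}
\]
by the binomial series $\sum_j\Gamma(a+j)x^j/j!=\Gamma(a)(1-x)^{-a}$ with $a=c+q$. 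This avoids any integrability issue, and the direct summation is the cleanest route.

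For the martingale, taking $q=1$ shows that $M(\theta)=e^{-\theta}Z(\theta)$ is a martingale with mean $c$; the Markov property together with the generator computation gives the martingale property. Taking $q=2$ gives
\[
 \E Z^2=\E\rise{Z}{2}-\E Z=c(c+1)e^{2\theta}-ce^{\theta}.
\]
Hence
\[
 \E M^2=c(c+1)-ce^{-\theta}
 \quad\text{and}\quad
 \E(M/c-1)^2=\frac{(c+1)-e^{-\theta}}{c}-1=\frac{1-e^{-\theta}}{c},
\]
which is~\eqref{eq:birth-relative}. Since $\sup_\theta\E M^2<\infty$, the martingale converges almost surely and in $L^2$.

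To identify the limit law, use~\eqref{eq:birth-law}: $B-d$ is negative binomial with parameter $e^{-\theta}$. The Laplace transform satisfies
\[
 \E e^{-\lambda e^{-\theta}(B-d)}\to(1+\lambda)^{-c},
\]
since $(1-e^{-\theta})\approx 1$ and $1-e^{-\lambda e^{-\theta}}\sim\lambda e^{-\theta}$. The factor $e^{-\theta}\delta$ is negligible, so the limit is $G_c$; alternatively, the moments $\rise{c}{q}$ of the limit are the Gamma moments and determine it.

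The only step needing care is the uniqueness or integrability justification in the first two parts. The direct series computation from the explicit law handles it, so I expect no real obstacle.
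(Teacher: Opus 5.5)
Your proposal is correct and follows the paper's proof essentially step for step. The negative-binomial law is checked against the forward equations and summed to one. The rising-factorial moments come from the same direct series computation. The cases $q=1,2$ with the Markov property give the martingale property, \eqref{eq:birth-relative} and $L^2$ boundedness. The limit is identified through the same Laplace transform.

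One slip in that last step: you cannot replace the factor $1-e^{-\theta}$ by $1$, since that would give the wrong limit $\lambda^{-(d+\delta)}$. Instead, write the denominator as $1-e^{-\lambda e^{-\theta}}+e^{-\theta}e^{-\lambda e^{-\theta}}=(1+\lambda)e^{-\theta}\{1+o(1)\}$; this is what produces the stated limit $(1+\lambda)^{-(d+\delta)}$.
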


\begin{proof}
The probabilities in \eqref{eq:birth-law} solve the forward equations
and sum to one by the negative-binomial series. Multiplying their
$j$th terms by $\Gamma(d+\delta+j+q)/\Gamma(d+\delta+j)$ and summing
gives \eqref{eq:birth-moments}, also for noninteger $q$.
The Markov property and the cases $q=1,2$ give the martingale
property, \eqref{eq:birth-relative}, and $L^2$ boundedness. Its
Laplace transform converges to $(1+\lambda)^{-(d+\delta)}$, since
\[
 \E e^{-\lambda e^{-\theta}(B^{(\delta,d)}(\theta)-d)}
 =\left\{\frac{e^{-\theta}}
 {1-(1-e^{-\theta})e^{-\lambda e^{-\theta}}}\right\}^{d+\delta}.
\]
This identifies the $L^2$ limit.
\end{proof}

\begin{lemma}\label{lem:clock}
For every fixed $q\ge0$ there is a constant $C_q$ such that, for all
integers $2\le s\le t$,
\begin{equation}\label{eq:clock-estimates}
 |\log R_q(s,t)-q\log g(s,t)|\le\frac{C_q}{s},\qquad
 R_q(s,t)\le C_q g(s,t)^q.
\end{equation}
In particular $C^{-1}g(s,t)\le R_1(s,t)\le Cg(s,t)$, and
\begin{equation}\label{eq:summable-growth}
 \sum_{u>s}\frac{g(s,u)}{u^2}\le\frac Cs.
\end{equation}
\end{lemma}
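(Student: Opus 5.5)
We prove the estimates by expanding the logarithm of each factor and summing the resulting series over edge times. Write $\log R_q(s,t)=\sum_{s<e\le t}\log(1+q/W_e)$. Using $|\log(1+x)-x|\le x^2$ for $x\ge0$, we obtain
\[
 \log R_q(s,t)=q\sum_{s<e\le t}W_e^{-1}+O\Big(q^2\sum_{s<e\le t}W_e^{-2}\Big).
\]
For $u\ge3$ we have $W_{u,i}=mp(u-1)-2m+i-1\ge mp(u-1)-2m\ge c\,u$, since $mp=2m+\delta>m$. Note also that $W_{u,i}>0$ for all $u\ge3$: indeed $W_{u,1}\ge W_{3,1}=2mp-2m>0$ because $p>1$. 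Hence $\sum_{u>s}\sum_i W_{u,i}^{-2}\le C/s$.

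For the first-order sum, the expansion already used in the proof of Lemma~\ref{lem:mean-profile} gives
\[
 \sum_{i=1}^m W_{u,i}^{-1}=\frac{\alpha}{u}+O(u^{-2}).
\]
Directly, $W_{u,i}^{-1}=\{mp(u-1)+c_i\}^{-1}$ with $|c_i|\le 2m+m$, so $W_{u,i}^{-1}=(mpu)^{-1}+O(u^{-2})$, and $m/(mp)=\alpha$. Summing over $s<u\le t$, and using
\[
 \sum_{s<u\le t}u^{-1}=\log(t/s)+O(s^{-1}),
\]
which follows from comparison with the integral of $1/x$, yields
\[
 \Big|q\sum_{s<e\le t}W_e^{-1}-q\alpha\log(t/s)\Big|\le C_q/s.
\]
Since $q\log g(s,t)=q\alpha\log(t/s)$, this proves the first estimate in \eqref{eq:clock-estimates}. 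The second estimate follows by exponentiating: $C_q/s\le C_q/2$, so $R_q\le e^{C_q/2}g^q$.

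The case $s=2$ deserves a check. The edges after time $2$ start at $u=3$, where the denominators are positive and bounded below as shown, so finitely many initial terms are absorbed into the constant. If endpoints lie inside a step, the same bounds hold with $s,t$ the step indices, since at most $2m$ extra factors, each $1+O(1/s)$, change $\log R_q$ by $O(1/s)$.

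Taking $q=1$ and exponentiating both sides of the first estimate gives $e^{-C/s}g\le R_1\le e^{C/s}g$, hence $C^{-1}g\le R_1\le Cg$.

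For \eqref{eq:summable-growth}, write
\[
 \sum_{u>s}g(s,u)u^{-2}=s^{-\alpha}\sum_{u>s}u^{\alpha-2}.
\]
Because $\alpha<1$, the exponent $\alpha-2<-1$, and the integral comparison gives
\[
 \sum_{u>s}u^{\alpha-2}\le\int_s^\infty x^{\alpha-2}\dd x+s^{\alpha-2}\le C s^{\alpha-1}.
\]
This yields the bound $C/s$.

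The main (mild) obstacle is bookkeeping of the $O(u^{-2})$ errors uniformly in $s\ge2$, including positivity of $W_{u,i}$ at the smallest steps; this is exactly where $\delta>-m$ (i.e.\ $p>1$) enters.
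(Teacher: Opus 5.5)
Your proof is correct and follows the paper's own argument. Both use the comparability $W_{u,i}\asymp u$, the step-wise expansions $\sum_i W_{u,i}^{-1}=\alpha/u+O(u^{-2})$ and $\sum_i W_{u,i}^{-2}=O(u^{-2})$, a second-order expansion of $\log(1+q/W_e)$ together with a harmonic-sum versus integral comparison, and finally the summation of $s^{-\alpha}u^{\alpha-2}$ using $\alpha<1$; you add the bookkeeping the paper leaves implicit, such as positivity of $W_{3,1}=2r$ and the explicit lower bound on the denominators.
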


\begin{proof}
The denominators are positive and comparable to $u$ at every edge
of step $u$, with constants depending only on the fixed parameters.
At step $u$,
\[
 \sum_{i=1}^m W_{u,i}^{-1}
 =\frac{m}{(2m+\delta)u}+O(u^{-2}),\qquad
 \sum_{i=1}^m W_{u,i}^{-2}=O(u^{-2}).
\]
Expanding $\log(1+q/W_{u,i})$ and comparing the harmonic sum with
its integral proves \eqref{eq:clock-estimates}. Since $\alpha<1$,
summing $s^{-\alpha}u^{\alpha-2}$ proves
\eqref{eq:summable-growth}.
\end{proof}

Let $Y$ be a one-vertex degree chain, started from an admissible degree
$d$ at time $s$, whose increment at edge $e=(t,i)$ is Bernoulli with
probability $\{Y(e-)+\delta\}/W_e$. Let $\widetilde Y$ start
from the same degree and replace that increment by the increment of
$B^{(\delta,\widetilde Y(e-))}$ run for time $\log(1+W_e^{-1})$.
The corresponding transition operators are
$P_{s,t}$, already used in Section~\ref{sec:clt}, and $Q_{s,t}$. For each vertex $v$, define
$\widetilde D_v$ by these latter transitions, starting from $m$ at
$b_v=\max(v,2)$, and take the processes for different vertices
independent. These definitions also apply between individual edges.

\begin{lemma}\label{lem:moments}
For every real $q\ge0$,
\begin{equation}\label{eq:degree-moments}
 \E Y(t)^q+\E\widetilde Y(t)^q
 \le C_q(d+1)^qg(s,t)^q,
 \qquad
 \E D_v(t)^q+\E\widetilde D_v(t)^q
 \le C_q g(b_v,t)^q.
\end{equation}
The exact identities are
\begin{equation}\label{eq:exact-moments}
 \E\rise{(Y(t)+\delta)}{q}
 =\rise{(d+\delta)}{q}R_q(s,t),\qquad
 \E\rise{(\widetilde Y(t)+\delta)}{q}
 =\rise{(d+\delta)}{q}R_1(s,t)^q.
\end{equation}
\end{lemma}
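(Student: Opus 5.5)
The plan is to prove the exact identities~\eqref{eq:exact-moments} first and then deduce the bounds~\eqref{eq:degree-moments} from them. For $Y$ I would compute the effect of a single edge $e$. Conditionally on $Y(e-)=y$, the increment is Bernoulli with success probability $(y+\delta)/W_e$. Using $\rise{(x+1)}{q}=\rise{x}{q}(x+q)/x$, this gives
\[
 \E\bigl[\rise{(Y(e)+\delta)}{q}\mid Y(e-)=y\bigr]
 =\rise{(y+\delta)}{q}\Bigl\{1+\frac{y+\delta}{W_e}\cdot\frac{q}{y+\delta}\Bigr\}
 =\rise{(y+\delta)}{q}\Bigl(1+\frac q{W_e}\Bigr).
\]
This holds for every real $q\ge0$. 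Iterating over the edges in $(s,t]$ by the tower property then yields the first identity. The chain starts from an admissible degree, and I would check that the Bernoulli probabilities stay in $[0,1]$ at every step. This follows from the admissibility computation in the proof of Lemma~\ref{lem:mean-profile}, where $W_{t,i}$ exceeds the largest possible weight by $(m+\delta)(t-2)>0$. For $\widetilde Y$, the same tower argument applies, but the one-edge factor now comes from~\eqref{eq:birth-moments} with $\theta=\log(1+W_e^{-1})$. That factor is $e^{q\theta}=(1+W_e^{-1})^q$, and the product over edges is $R_1(s,t)^q$.

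To get the bounds, I would use that $\rise{(x+\delta)}{q}\ge c_q x^q$ for $x\ge m$, since $x+\delta\ge r>0$ and $\Gamma(x+\delta+q)/\Gamma(x+\delta)\asymp(x+\delta)^q$ uniformly for $x+\delta\ge r$. Together with $\rise{(d+\delta)}{q}\le C_q(d+1)^q$, this gives $\E Y(t)^q\le C_q(d+1)^qR_q(s,t)$. Lemma~\ref{lem:clock} then gives $R_q(s,t)\le C_qg(s,t)^q$, and likewise $R_1(s,t)^q\le C_qg(s,t)^q$. The lemma is stated for integer endpoints, but it extends to endpoints inside a step because each factor $1+q/W_e$ is bounded and at most $m$ of them are involved. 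The comparability convention stated after~\eqref{eq:products} then applies.

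For $\widetilde D_v$, I would apply this with $d=m$ and $s=b_v$. For $D_v$ itself, the marginal degree process of a fixed vertex $v$ in the graph is exactly a one-vertex chain $Y$ started at $m$ at time $b_v$. The reason is that~\eqref{eq:attachment} gives $v$ the conditional increment probability $(D_v(e-)+\delta)/W_e$, which depends only on its own degree because $W_e$ is deterministic. Hence $D_v$ is Markov in its own filtration with the transition operators $P$, and the bound follows. For the seed vertices at time $2$, I would start them at degree $m$ at $b_v=2$ and use the same argument.

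The main obstacle is the uniformity of the comparison $\rise{(x+\delta)}{q}\asymp x^q$ for noninteger $q$ when $\delta$ is close to $-m$, so that $r$ is small. Since $m$ and $\delta$ are fixed, the constants may depend on $r$, and the comparison holds on $x\ge m$ by continuity together with Stirling's formula at infinity. The other point to check is that the marginal-Markov claim for $D_v$ is exact, including the step at which vertex $v$ itself arrives, where its degree is held at $m$ until step $v+1$. Both points look routine.
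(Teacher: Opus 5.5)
Your proposal is correct and follows the paper's argument. The paper also obtains the one-edge multiplier $1+q/W_e$ from the identity $\rise{(y+1)}{q}-\rise{y}{q}=q\rise{y}{q}/y$ and the comparison multiplier $(1+W_e^{-1})^q$ from \eqref{eq:birth-moments}, iterates to get \eqref{eq:exact-moments}, and then deduces \eqref{eq:degree-moments} from the comparability of $x^q$ with $\rise{(x+\delta)}{q}$ on $x\ge m$ together with Lemma~\ref{lem:clock}; your additional checks (admissible Bernoulli probabilities, and that the marginal of $D_v$ is exactly a one-vertex chain started at $m$ at time $b_v$) simply make explicit what the paper leaves implicit.
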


\begin{proof}
For every $y>0$ and $q\ge0$,
$\rise{(y+1)}{q}-\rise{y}{q}=q\rise{y}{q}/y$.
Thus the conditional multiplier of $\rise{(Y+\delta)}{q}$ at one
edge is $1+q/W_e$. The comparison multiplier is
$(1+1/W_e)^q$ by \eqref{eq:birth-moments}. Iteration gives
\eqref{eq:exact-moments}. Ordinary powers of $d$ are uniformly comparable to
$\rise{(d+\delta)}{q}$ for $d\ge m$, because their ratio tends to
one as $d\to\infty$. Lemma~\ref{lem:clock} therefore proves
\eqref{eq:degree-moments}.
\end{proof}

\subsection{One-edge and terminal comparisons}

A coupling of one edge allocation with independent negative-binomial
increments controls the error in the comparison of terminal values.

\begin{lemma}\label{lem:one-edge}
Let $x>0$, $y_j>0$ for $1\le j\le h$, and
$y=\sum_jy_j\le1/x$. Let $K$ assign mass $y_jx$ to the $j$th
unit vector and mass $1-yx$ to zero. Let $Z_j$ be independent
negative-binomial increments with shape $y_j$ and success
probability $(1+x)^{-1}$. There is a coupling such that
\begin{equation}\label{eq:one-edge-comparison}
 \E|K_j-Z_j|\le y_j(y+1)x^2,\qquad
 \PP(K\ne Z)\le\E\|K-Z\|_1\le y(y+1)x^2.
\end{equation}
\end{lemma}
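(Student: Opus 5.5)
The plan is to build the coupling explicitly from the law of $Z=(Z_1,\ldots,Z_h)$, letting $K$ copy $Z$ whenever $Z$ is itself an admissible value of $K$, and then to use the fact that $K_j$ and $Z_j$ have the \emph{same mean}. Indeed, $\E K_j=y_jx$. Also $\E Z_j=y_j\cdot x(1+x)\cdot(1+x)^{-1}=y_jx$, since a negative-binomial increment with shape $a$ and success probability $(1+x)^{-1}$ has mean $a x$. Because $K_j\in\{0,1\}$ and $Z_j\ge0$ is integer valued,
\[
 \E|K_j-Z_j|=\E(Z_j-K_j)+2\E(K_j-Z_j)^+=2\,\PP(K_j=1,\ Z_j=0).
\]
So everything reduces to showing that the coupling rarely puts $K$ at $e_j$ when $Z_j=0$. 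The final inequality $\PP(K\ne Z)\le\E\|K-Z\|_1$ is immediate because $K-Z$ is integer valued, and summing the coordinatewise bound over $j$ gives $y(y+1)x^2$.

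For the construction, first record the atoms of $Z$ that matter:
\[
\PP(Z=0)=(1+x)^{-y}\ge1-yx,
\]
by convexity, and
\[
\PP(Z=e_j)=y_jx(1+x)^{-y-1}\le y_jx.
\]
Next, draw $Z$ and an independent uniform variable. On $\{Z=e_j\}$, set $K=e_j$. On $\{Z=0\}$, set $K=0$ with conditional probability $(1-yx)/(1+x)^{-y}\le1$. The remaining mass of $K$ at each $e_j$ is
\[
y_jx\{1-(1+x)^{-y-1}\},
\]
together with the remaining mass at $0$. Distribute these over the leftover event $\{Z\notin\{0,e_1,\ldots,e_h\}\}\cup\{Z=0,\text{ not used}\}$. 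Preferentially, send the leftover $e_j$ mass to outcomes with $Z_j\ge1$ (for instance $Z_j\ge2$, or $Z_j\ge1$ together with some $Z_i\ge1$), and use $\{Z=0\}$ only when that is exhausted. This gives a valid coupling with the correct marginals.

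The bound then reads
\[
\PP(K_j=1,Z_j=0)\le y_jx\{1-(1+x)^{-y-1}\}\le y_j(y+1)x^2,
\]
using $1-(1+x)^{-b}\le bx$. This crude version loses a factor $2$ relative to~\eqref{eq:one-edge-comparison}, and removing that factor is the main obstacle.

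To get the sharp constant, I will exploit the preferential allocation above. The event $\{K_j=1,Z_j=0\}$ can only carry the part of the leftover $e_j$ mass that exceeds $\PP(Z_j\ge1,\ Z\ne e_j)$. Computing that probability exactly from independence, it equals
\[
1-(1+x)^{-y_j}-y_jx(1+x)^{-y-1}.
\]
Comparing this with the leftover via the elementary inequalities $ax/(1+x)\le1-(1+x)^{-a}\le ax$ should show the excess is at most $y_j(y+1)x^2/2$. If the inequality chain turns out to be slightly too weak in some parameter range, the fallback is to note that the downstream uses (Lemma~\ref{lem:moments} and the terminal comparison) only need the bound up to a constant factor. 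In that case I would state and prove the version with constant $2$.
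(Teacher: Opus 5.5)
Your reduction $\E|K_j-Z_j|=2\PP(K_j=1,\,Z_j=0)$ from equal means is correct. So is the crude coupling, which gives the lemma with an extra factor $2$. The gap is the step meant to remove that factor.

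You bound $\PP(K_j=1,\,Z_j=0)$ by the excess of the leftover $e_j$-mass over $\PP(Z_j\ge1,\,Z\ne e_j)$. This tacitly reserves the whole event $\{Z_j\ge1,\,Z\ne e_j\}$ for $e_j$. These events overlap for different $j$ (for example at $Z=e_1+e_2$), so they cannot all be reserved at once. ``Preferential'' allocation does not say how shared outcomes are split, and the split matters. Take $h=2$ and serve $e_1$ first. It absorbs all of $\{Z_1\ge1,\,Z\ne e_1\}$, because its leftover mass exceeds that probability. Coordinate $2$ is then left with $\{Z_1=0,\,Z_2\ge2\}$. The $e_2$-mass forced onto $\{Z=0\}$ is $y_2(2y_1+y_2+1)x^2/2+O(x^3)$, which exceeds the target $y_2(y+1)x^2/2$ by about $y_1y_2x^2/2$.

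Separately, the inequality $ax/(1+x)\le1-(1+x)^{-a}$ holds only for $a\le1$. The per-coordinate bound $ax-1+(1+x)^{-a}\le a(a+1)x^2/2$ is true, by the third-order Taylor bound, but it does not address the overlap.

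The missing idea is a symmetric allocation rule, combined with the conditional mean of $Z$ given its total. The construction goes as follows.
\begin{itemize}
\item Put $A=\sum_jZ_j$. It is negative binomial with shape $y$, so $\E A=yx$ and $\E A(A-1)=y(y+1)x^2$.
\item On $\{A\ge1\}$ set $K=e_J$, with $J$ drawn with probabilities $Z_j/A$. This agrees with your rule on $\{Z=e_j\}$.
\item Place the remaining mass $yx-\PP(A\ge1)=\E(A-1)_+\in[0,\PP(A=0)]$ on a subset of $\{A=0\}$, drawing $J$ with probabilities $y_j/y$. Set $K=0$ elsewhere.
\end{itemize}
Given $A$, $Z$ is Dirichlet-multinomial with $\E(Z_j\mid A)=Ay_j/y$. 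Hence the $e_j$-mass on $\{A\ge1\}$ is $(y_j/y)\PP(A\ge1)$, and $K$ has the prescribed law. Moreover $\{K_j=1,\,Z_j=0\}\subset\{A=0\}$, and this event has probability $(y_j/y)\E(A-1)_+$. The inequality $2(A-1)_+\le A(A-1)$ then yields exactly $\E|K_j-Z_j|\le y_j(y+1)x^2$.

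This is the paper's construction. Your constant-$2$ fallback would suffice for the later comparison lemmas, but it does not prove the statement as written.
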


\begin{proof}
The sum $A=\sum_jZ_j$ is negative binomial with shape $y$, so
$\E A=yx$ and $\E A(A-1)=y(y+1)x^2$.
Conditional on $A=\ell$, the vector has probabilities
\[
 \frac{\ell!\,\Gamma(y)}{\Gamma(y+\ell)}
 \prod_{j=1}^h\frac{\Gamma(y_j+z_j)}{\Gamma(y_j)z_j!},
 \qquad \sum_jz_j=\ell,
\]
and hence $\E(Z_j\mid A=\ell)=\ell y_j/y$.
On $\{A\ge1\}$ choose $J$ with probabilities $Z_j/A$ and put
$K=e_J$. On a subset of $\{A=0\}$ of unconditional probability
$yx-\PP(A\ge1)$, choose $J$ with probabilities $y_j/y$ and also
put $K=e_J$. Otherwise put $K=0$. The extra probability is between
zero and $\PP(A=0)$ because $\PP(A\ge1)\le yx\le1$.
The resulting $K$ has the prescribed law, and
\[
 \E|K_j-Z_j|=\frac{2y_j}{y}\E(A-1)_+
 \le\frac{y_j}{y}\E A(A-1).
\]
Summing over $j$ gives the remaining bounds.
\end{proof}

\begin{lemma}\label{lem:comparison}
For integers $2\le s\le n$ and $m\le d\le m(s-1)$, the
following bound holds for every Lipschitz $\varphi$ on the integer
states, with a constant independent of $s,d,n$:
\begin{equation}\label{eq:kernel-comparison}
 |P_{s,n}\varphi(d)-Q_{s,n}\varphi(d)|
 \le C\Lip(\varphi)(d+1)^2\frac{R_1(s,n)}s.
\end{equation}
For fixed $h$, distinct labels $v_1,\ldots,v_h\le n$, and
$2\le u\le n$, the tracked PA degrees and the independent comparison
processes can be coupled up to time $u$ with mismatch probability
at most
\begin{equation}\label{eq:finite-coupling}
 C_h\sum_{j=1}^h\sum_{b_{v_j}<t\le u}
 \frac{g(b_{v_j},t)^2}{t^2}.
\end{equation}
For $v_j\in[\varepsilon k,Lk]$ and $u\le Tk$, with fixed
$0<\varepsilon<L<T$, this probability is at most $C_{h,\varepsilon,L,T}/k$.
\end{lemma}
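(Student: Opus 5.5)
Both assertions come from edge-by-edge couplings built on Lemma~\ref{lem:one-edge}, with the Lemma~\ref{lem:moments} moment bounds controlling the accumulated errors.

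\emph{Kernel comparison.} I would use a telescoping (Lindeberg-type) decomposition. Write $P_{s,n}$ and $Q_{s,n}$ as ordered products of one-edge kernels $P_e$ and $Q_e$. Then
\[
 P_{s,n}\varphi-Q_{s,n}\varphi=\sum_{s<e\le n}P_{s,e-}\,(P_e-Q_e)\,Q_{e,n}\varphi .
\]
The single-edge difference is handled by Lemma~\ref{lem:one-edge} with $h=1$, $y_1=y=Y(e-)+\delta$ and $x=W_e^{-1}$. The hypothesis $yx\le1$ holds on admissible states, since $W_e$ exceeds the maximal weight. The Bernoulli increment and the negative-binomial increment then couple with $\E|K-Z|\le y(y+1)W_e^{-2}$. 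After the edge, the two chains restart from states differing by $|K-Z|$, and $Q_{e,n}$ propagates such a difference by at most $\Lip(\varphi)R_1(e,n)$ times it. This is the same monotone coupling as in Lemma~\ref{lem:one-degree-difference}: differences grow with mean multiplier $R_1$.

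Taking expectations and using the second moment $\E(Y(e-)+\delta)^2\le C(d+1)^2g(s,t)^2$ from~\eqref{eq:degree-moments} bounds the $e$th term by
\[
 C\Lip(\varphi)(d+1)^2 g(s,t)^2t^{-2}R_1(e,n).
\]
By Lemma~\ref{lem:clock}, $g(s,t)R_1(e,n)\le CR_1(s,n)$, so the sum is at most
\[
 C\Lip(\varphi)(d+1)^2R_1(s,n)\sum_{t>s}g(s,t)t^{-2}.
\]
By~\eqref{eq:summable-growth}, the last sum is $O(1/s)$, which gives~\eqref{eq:kernel-comparison}. The factor $m$ for the edges within a step only changes constants.

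\emph{Finite coupling.} I would run the PA graph and couple, edge by edge, the increments of the tracked vertices $v_1,\ldots,v_h$. Conditionally on $\F_{e-}$, the vector of indicators $(\one\{J_e=v_j\})_j$ is exactly a law $K$ as in Lemma~\ref{lem:one-edge}, with $y_j=D_{v_j}(e-)+\delta$ and $x=W_e^{-1}$, restricted to the tracked coordinates. The untracked vertices go into the zero mass, and $yx\le1$ holds.

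Lemma~\ref{lem:one-edge} then couples $K$ with independent negative-binomial increments $Z_j$, which are exactly the comparison increments of $B^{(\delta,\cdot)}$ over time $\log(1+W_e^{-1})$. At each edge, as long as no mismatch has yet occurred, the PA and comparison states coincide. The conditional mismatch probability is at most $y(y+1)W_e^{-2}$.

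A union bound over edges, together with second moments from~\eqref{eq:degree-moments} evaluated at the common state, gives a total of at most
\[
 C_h\sum_j\sum_{b_{v_j}<t\le u}g(b_{v_j},t)^2t^{-2}.
\]
Cross terms $y_iy_j$ are bounded by squares, and the $h$ vertices are born at different times; a vertex contributes nothing before its birth. Independence across $j$ of the comparison processes holds because, at each edge, the $Z_j$ are conditionally independent given the past with laws depending only on their own states. Hence the comparison paths are independent Markov chains.

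\emph{Final estimate.} For $b_{v_j}\ge\varepsilon k$ and $t\le Tk$, we have $g\le(T/\varepsilon)^\alpha$ and $\sum_{t>\varepsilon k}t^{-2}\le C/(\varepsilon k)$. This gives $C_{h,\varepsilon,L,T}/k$.

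\emph{Main obstacle.} I expect the hardest step to be the propagation of a one-edge discrepancy in the kernel comparison. There one must bound $|Q_{e,n}\varphi(a)-Q_{e,n}\varphi(b)|$ by $\Lip(\varphi)R_1(e,n)|a-b|$ for the comparison chain, which moves by several units per edge, and then justify interchanging this bound with the expectation over the random state $Y(e-)$. I would first try the monotone common-randomness coupling of two pure-birth processes started at $a<b$. Their difference is again a pure-birth process of the type in Lemma~\ref{lem:birth}, so its mean multiplier per edge is exactly $1+W_e^{-1}$, and the product over edges is $R_1$.
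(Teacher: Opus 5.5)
Your proposal is correct and follows the paper's proof essentially step for step. It uses the same telescoping, with PA kernels before and comparison kernels after the replaced edge, the one-edge coupling of Lemma~\ref{lem:one-edge}, the monotone-coupling bound $\Lip(Q_{e,n}\varphi)\le R_1(e,n)\Lip(\varphi)$, the moment and clock estimates closed by \eqref{eq:summable-growth}, and, for the pathwise part, the same first-mismatch union bound with second moments; you should also state explicitly, as the paper does, that after a mismatch each vector evolves under its own transition law, since this is what keeps the comparison coordinates independent.
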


\begin{proof}
Two pure-birth processes whose initial degrees differ by an integer
can be coupled monotonically so that their difference is a linear
birth process with rate equal to its current size. At one edge its
expected multiplier is $1+1/W_e$, independently of the offset.
It follows that
\begin{equation}\label{eq:comparison-lipschitz}
 \Lip(Q_{e,n}\varphi)\le R_1(e,n)\Lip(\varphi).
\end{equation}
Telescope the transition operators, using PA transitions before the
edge being replaced and comparison transitions afterward. All states
at the replaced edge are then admissible PA states, and no PA transition
is applied to an unbounded comparison state. Lemma~\ref{lem:one-edge}
and \eqref{eq:comparison-lipschitz} bound the error by
\[
 \Lip(\varphi)\sum_{e>s}R_1(e,n)W_e^{-2}
 \E[\{Y(e-)+\delta\}\{Y(e-)+\delta+1\}].
\]
By Lemmas~\ref{lem:clock} and~\ref{lem:moments}, this is at most
\[
 C\Lip(\varphi)(d+1)^2R_1(s,n)
 \sum_{t>s}g(s,t)t^{-2},
\]
which proves \eqref{eq:kernel-comparison} by
\eqref{eq:summable-growth}.

For the pathwise assertion, use the coupling of
Lemma~\ref{lem:one-edge} whenever the two tracked vectors agree.
After a mismatch evolve each vector according to its own transition
law. The comparison transitions have product laws conditional on
their past, so their coordinates remain independent. At a possible
first mismatch, the sum of tracked weights is at most $W_e$, and
the expected mismatch bound in \eqref{eq:one-edge-comparison} is at
most $C_h\sum_jg(b_{v_j},t)^2/t^2$ by the second moment estimate.
A union bound gives \eqref{eq:finite-coupling}. For
$v_j\in[\varepsilon k,Lk]$ and $t\le Tk$, $g(b_{v_j},t)$ is bounded
uniformly, and the sum in~\eqref{eq:finite-coupling} is at most
$C_{h,\varepsilon,L,T}/k$.
\end{proof}

\subsection{Limits of individual degrees}
The comparison bound gives the marginal limits used in
Section~\ref{sec:clt}, including the chain started with one additional
degree in~\eqref{eq:weighted-identity}.

\begin{lemma}\label{lem:degree-limit}
Let $s_n\to\infty$ with $s_n/n\to0$, and let a one-degree chain
$Y$ start at a fixed integer $d\ge m$ at time $s_n$. Then
\[
 \frac{Y(n)}{R_1(s_n,n)}\convd G_{d+\delta}\quad\hbox{in }\R.
\]
In particular, if $v_n\to\infty$ and $v_n/n\to0$, then
$D_{v_n}(n)/R_1(v_n,n)\convd G_r$ in $\R$.
\end{lemma}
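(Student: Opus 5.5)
The plan is to compare the PA chain $Y$ with the comparison chain $\widetilde Y$ from the same start, and to identify the limit of $\widetilde Y$ through the pure-birth process of Lemma~\ref{lem:birth}. By construction, $\widetilde Y$ is the process $B^{(\delta,d)}$ observed at the deterministic times $\Theta(s_n,e)$. The increments at successive edges are birth-process increments over the consecutive time intervals $\log(1+W_e^{-1})$, and the Markov property of $B$ concatenates them. Hence
\[
 \widetilde Y(n)\overset{d}{=}B^{(\delta,d)}(\Theta(s_n,n)),\qquad
 R_1(s_n,n)=e^{\Theta(s_n,n)}.
\]
By Lemma~\ref{lem:clock}, $\Theta(s_n,n)\ge\alpha\log(n/s_n)-C/s_n\to\infty$. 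The $L^2$ convergence in Lemma~\ref{lem:birth} therefore gives
\[
 \frac{\widetilde Y(n)+\delta}{R_1(s_n,n)}\to G_{d+\delta}
\]
in $L^2$, hence in distribution. The shift by $\delta$ is negligible because $R_1(s_n,n)\to\infty$.

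Next I transfer the limit from $\widetilde Y$ to $Y$ through \eqref{eq:kernel-comparison}, applied to test functions $\varphi(j)=\psi(j/R_1(s_n,n))$, where $\psi$ is bounded and Lipschitz on $\R$. Then $\Lip(\varphi)\le\Lip(\psi)/R_1(s_n,n)$, and the admissibility condition $m\le d\le m(s_n-1)$ holds for large $n$ because $d$ is fixed and $s_n\to\infty$. The kernel bound gives
\[
 |\E\psi(Y(n)/R_1)-\E\psi(\widetilde Y(n)/R_1)|
 \le C\Lip(\psi)(d+1)^2/s_n\to0.
\]
Expectations of bounded Lipschitz functions determine weak convergence (portmanteau), so $Y(n)/R_1(s_n,n)\convd G_{d+\delta}$.

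For the ``in particular'' statement, take $s_n=v_n$ and $d=m$. Once $v_n\ge3$, the degree of $v_n$ is exactly a one-degree chain started at $m$ at time $b_{v_n}=v_n$: its marginal increments at each edge are Bernoulli with probability $(D+\delta)/W_e$ given the past, regardless of the other vertices. The shape is then $m+\delta=r$.

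The main obstacle I expect is the identification $\widetilde Y(n)\overset{d}{=}B(\Theta)$ at endpoints inside a step. The time convention must match the product defining $R_1(s,n)$ exactly, and the increments must be taken from the current state $\widetilde Y(e-)$. The semigroup property of the linear birth process settles this. The remaining steps are direct applications of the stated lemmas.
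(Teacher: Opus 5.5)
Your proposal is correct and follows the paper's proof step for step. You apply \eqref{eq:kernel-comparison} to $\psi(j/R_1(s_n,n))$ to get the error $C\Lip(\psi)(d+1)^2/s_n$, identify the comparison variable as $B^{(\delta,d)}(\Theta(s_n,n))/R_1(s_n,n)$ with $\Theta(s_n,n)\to\infty$ by Lemma~\ref{lem:clock}, take the $G_{d+\delta}$ limit from Lemma~\ref{lem:birth}, and set $d=m$ for the vertex statement. One small imprecision: your ``$L^2$'' convergence of $\{\widetilde Y(n)+\delta\}/R_1(s_n,n)$ only makes literal sense once all the $\widetilde Y(n)$ are realized on a single birth process, but the distributional identity you state is all the argument needs.
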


\begin{proof}
For sufficiently large $s_n$, a chain started at $d$ has admissible
transition probabilities. Apply~\eqref{eq:kernel-comparison} to
$\varphi(j)=f\{j/R_1(s_n,n)\}$, for any bounded Lipschitz $f$.
The difference of the two expectations is at most
$C\Lip(f)(d+1)^2/s_n$. The comparison variable is
$B^{(\delta,d)}(\Theta(s_n,n))/R_1(s_n,n)$.
Lemma~\ref{lem:clock} gives $\Theta(s_n,n)\to\infty$, and
Lemma~\ref{lem:birth} identifies its limit as $G_{d+\delta}$.
The deterministic shift $\delta/R_1(s_n,n)$ tends to zero.
The assertion for $D_{v_n}$ follows by taking $d=m$.
\end{proof}

\section{Variance values and the stationary degree profile}
\label{sec:examples}
\subsection{Exact variance values}
For positive integers $a,b$, the beta representation gives exact
values without numerical quadrature, because
\[
 I_{1/2}(a,b)=2^{-(a+b-1)}
             \sum_{j=a}^{a+b-1}\binom{a+b-1}{j}
\]
by the binomial representation of the beta distribution function.
Substitution in~\eqref{eq:variance-beta} gives the following values.
\[
\begin{array}{cc|cc|c|c}
 m&\delta&p&r&I_{1/2}(r+p,r)&V_m(\delta)\\ \hline
 1&0&2&1&1/8&1/8\\
 2&0&2&2&3/16&7/64\\
 2&2&3&4&11/64&31/576\\
 3&0&2&3&29/128&19/192
\end{array}
\]
For a negative offset, take $m=2$ and $\delta=-1$. Then $r=1$ and
$p=3/2$. The minimum of two independent rate-one exponential variables
has rate two, so $\E(G\wedge G')^p/\E G^p=2^{-p}$ and
\[
 V_2(-1)=\frac49-\frac1{3\sqrt2}.
\]

For $m=1$ and $\delta=0$, $\alpha^2=1/4$ and $V_1(0)=1/8$.
The two contributions to the reduction are equal and together halve
the iid asymptotic variance.

\subsection{The stationary degree profile}
The mean degree profile also displays a first-order lattice effect.
For an integer $d\to\infty$, the stationary profile
in~\eqref{eq:pi} satisfies
\begin{equation}\label{eq:stationary-bias}
 \frac{\sum_{j>d}\pi_j\log(j/d)}{\sum_{j>d}\pi_j}
 =\alpha+\frac{B_{m,\delta}}d+O(d^{-2}),\qquad
 B_{m,\delta}=\frac{\delta+p+1/2}{p+1}.
\end{equation}
To verify the coefficient, the Gamma-ratio expansion
\citep[\S5.11(iii)]{dlmf} gives
\[
 \sum_{j>d}\pi_j
 =\beta^pd^{-p}\left\{1-\frac{p(2\delta+p+1)}{2d}
                              +O(d^{-2})\right\}.
\]
Also
$\pi_j=p\beta^pj^{-p-1}
\{1-(p+1)(2\delta+p)/(2j)+O(j^{-2})\}$.
The Euler--Maclaurin formula gives a summation error $O(d^{-p-2})$
for $x^{-p-1}\log(x/d)$ on $[d,\infty)$: its value at $d$ is zero,
its first derivative there is $d^{-p-2}$, and the integral of the
absolute value of its second derivative is $O(d^{-p-2})$.
For $x^{-p-2}\log(x/d)$ the error is $O(d^{-p-3})$.
The remaining terms contribute $O(d^{-p-2})$ after summation.
Integrating the two leading terms therefore gives
\[
 \sum_{j>d}\pi_j\log(j/d)
 =\beta^pd^{-p}\left\{\frac1p
                  -\frac{p(2\delta+p)}{2(p+1)d}
                  +O(d^{-2})\right\}.
\]
Division proves~\eqref{eq:stationary-bias}.
For a real threshold $a$, write $d=\lfloor a\rfloor$.
The conditioning event is unchanged, and
$\log(j/a)=\log(j/d)+\log(d/a)$.
Thus the first-order coefficient becomes
$B_{m,\delta}-\{a\}$, where $\{a\}=a-\lfloor a\rfloor$.
It can vanish or be negative for admissible parameters.
For example, $B_{m,\delta}=0$ at
$\delta=-5m/\{2(m+1)\}$ when $m\ge2$.
These cancellations are another reason not to assert universal
necessity of the rate from the bias calculation alone.

\setlength{\bibsep}{2pt}

\end{document}